\newcommand{\myurl}[1]{\href{#1}{#1}}
\definecolor{lightgreen}{rgb}{0.7,1,0.7}
\definecolor{lightblue}{rgb}{0.7,0.7,1}
\newcommand{\eqdef}{\coloneqq}
\newcommand{\bC}{\mathbb{C}}
\newcommand{\bN}{\mathbb{N}}
\newcommand{\bNz}{\mathbb{N}_0}
\newcommand{\bQ}{\mathbb{Q}}
\newcommand{\bR}{\mathbb{R}}
\newcommand{\bZ}{\mathbb{Z}}
\newcommand{\cP}{\mathcal{P}}
\newcommand{\al}{\alpha}
\newcommand{\ga}{\gamma}
\newcommand{\ka}{\varkappa}
\newcommand{\la}{\lambda}
\newcommand{\si}{\sigma}
\newcommand{\schur}{\operatorname{s}}
\newcommand{\powersum}{\operatorname{p}}
\renewcommand{\hom}{\operatorname{h}}
\newcommand{\Homgen}{\operatorname{H}}
\newcommand{\elem}{\operatorname{e}}
\newcommand{\Mul}{\operatorname{Mul}}
\newcommand{\Add}{\operatorname{Add}}
\newtheorem{thm}{Theorem}[section]
\numberwithin{equation}{section}
\newtheorem{prop}[thm]{Proposition}
\newtheorem{lem}[thm]{Lemma}
\newtheorem{cor}[thm]{Corollary}
\theoremstyle{definition}
\newtheorem{defn}[thm]{Definition}
\newtheorem{ex}[thm]{Example}
\newtheorem{rem}[thm]{Remark}
\newtheorem{alg}[thm]{Algorithm}
\newcommand{\columnhom}{\operatorname{H}}
\newcommand{\Hom}{\operatorname{H}}
\newcommand{\Elem}{\operatorname{E}}
\definecolor{codegreen}{rgb}{0,0.6,0}
\definecolor{codegray}{rgb}{0.5,0.5,0.5}
\definecolor{codepurple}{rgb}{0.58,0,0.82}
\definecolor{backcolour}{rgb}{0.95,0.95,0.92}
\lstdefinestyle{mystyle}{
    language=Python,
    morekeywords={matrix, vector, parent},
    backgroundcolor=\color{backcolour},
    commentstyle=\color{codegray},
    basicstyle=\ttfamily,
    keywordstyle=\bfseries,
    numberstyle=\tiny\color{codegray},
    stringstyle=\color{codepurple},
    breakatwhitespace=false,
    breaklines=true,
    captionpos=b,
    keepspaces=true,
    numbers=none,
    numbersep=5pt,
    showspaces=false,
    showstringspaces=false,
    showtabs=false,
    tabsize=2
}
\newenvironment{egornote}{\color{blue}Egor.}{}
\title{Bialternant formula for Schur polynomials\\with repeating variables}
\author{Luis Angel Gonz\'{a}lez-Serrano and Egor A. Maximenko}
\begin{document}
\maketitle

\smallskip

\begin{abstract}
We consider polynomials of the form $\operatorname{s}_\lambda(y_1^{[\varkappa_1]},\ldots,y_n^{[\varkappa_n]})$,
where $\lambda$ is an integer partition,
$\operatorname{s}_\lambda$ is the Schur polynomial associated to $\lambda$,
and $y_j^{[\varkappa_j]}$ denotes $y_j$ repeated $\varkappa_j$ times.
We represent 
$\operatorname{s}_\lambda(y_1^{[\varkappa_1]},\ldots,y_n^{[\varkappa_n]})$
as a quotient
whose the denominator is the determinant of the confluent Vandermonde matrix,
and the numerator is the determinant of some generalized confluent Vandermonde matrix.
We give three algebraic proofs of this formula.

\medskip\noindent
\textbf{Keywords:}
Schur polynomials,
repeating variables,
confluent Vandermonde matrix.

\medskip\noindent
\textbf{MSC (2020):}
05E05, 15A15.
\end{abstract}

\bigskip

\section*{Authors' data}
Luis Angel Gonz\'{a}lez-Serrano, \quad
\myurl{https://orcid.org/0000-0001-6330-1781}, \\
e-mail: lags1015@gmail.com.

\medskip\noindent
Egor A. Maximenko, \quad
\myurl{https://orcid.org/0000-0002-1497-4338}, \\
e-mail: emaximenko@ipn.mx, egormaximenko@gmail.com.

\bigskip\noindent
Escuela Superior de F\'{i}sica y Matem\'{a}ticas, Instituto Polit\'{e}cnico Nacional,
Ciudad de M\'{e}xico, Postal Code 07730,
Mexico.

\section*{Funding}

The authors have been partially supported by Proyecto CONACYT ``Ciencia de Frontera''
FORDECYT-PRONACES/61517/2020,
by CONACYT (Mexico) scholarships,
and by IPN-SIP projects (Instituto Polit\'{e}cnico Nacional, Mexico).

\clearpage
\tableofcontents

\bigskip\medskip
\section{Introduction}

Given a list of variables $x=(x_1,\ldots,x_N)$
and an integer partition $\la=(\la_1,\la_2,\ldots,\la_{l(\la)})$ of length $l(\la)\le N$,
we denote by $\schur_\la(x_1,\ldots,x_N)$
or shortly by $\schur_\la(x)$
the Schur polynomial associated to $\la$,
in the variables $x_1,\ldots,x_N$.
Schur polynomials form a basis of the vector space of symmetric polynomials.
They play an important role in algebraic combinatorics,
representation theory, and related areas, see~\cite{Egge2019,Lascoux2003, Macdonald1995,Stanley2001,Tamvakis2012}.
There are many equivalent definitions for Schur functions and Schur polynomials,
including 
Jacobi--Trudy formula 
(which involves complete homogeneous polynomials):
\begin{equation}
\label{eq:Jacobi_Trudi_formula}
\schur_\la(x)
=
\det
\bigl[\hom_{\la_j - j + k} (x)\bigr]_{j,k=1}^{l(\la)},
\end{equation}
N\"{a}gelsbach--Kostka formula
(also known as dual Jacobi--Trudy formula),
the combinatorial definition
(via Young tableaux),
Giambelli's formula,
Young's raising operators,
etc.
In what follows, we say ``partition'' instead of ``integer partition''.
If $\la$ is a partition of length $<N$, then we extend it by zeros to the length $N$.
It is well known that $\det\bigl[\hom_{\la_j - j + k} (x)\bigr]_{j,k=1}^{l(\la)}$
does not change if $\la$ is extended by zeros.
Therefore,
\eqref{eq:Jacobi_Trudi_formula} is equivalent to
\begin{equation}
\label{eq:Jacobi_Trudi_formula_extended}
\schur_\la(x)
=
\det
\bigl[\hom_{\la_j - j + k} (x)\bigr]_{j,k=1}^{N}.
\end{equation}
For the computational purposes,
one of the most useful recipes is the so-called bialternant Cauchy--Jacobi formula:
\begin{equation}
\label{eq:schur_bialternant}
\schur_\la(x)
=\frac{\det \bigl[ x_k^{N + \la_j - j} \bigr]_{j,k = 1}^{N}}%
{\det \bigl[ x_k^{N - j} \bigr]_{j,k = 1}^{N}}.
\end{equation}
Both sides of~\eqref{eq:schur_bialternant} can be considered as polynomials in variables $x_1,\ldots,x_N$.
Moreover, the right-hand side of \eqref{eq:schur_bialternant} makes sense
if $x_1,\ldots,x_N$ are pairwise different numbers.
Nevertheless, \eqref{eq:schur_bialternant}
is not well defined if $x_1,\ldots,x_N$ are numbers
and some of them are equal.
For example,~\eqref{eq:schur_bialternant} cannot be applied directly to compute~$\schur_{(3,1)}(7,7,8)$.

In this paper, we prove a generalization of~\eqref{eq:schur_bialternant} in the case where some of $x_1,\ldots,x_N$ coincide.
We denote by $y=(y_1,\ldots,y_n)$ the list of pairwise different variables and by $\ka=(\ka_1,\ldots,\ka_n)$ the list of multiplicities.
Then, we use the following short notation for the list of variables with repetitions:
\begin{equation}
\label{eq:y_rep_informal_def}
y^{[\ka]}
\eqdef
\Bigl(y_1^{[\ka_1]},
\ldots,
y_n^{[\ka_n]}
\Bigr)
=\Bigl(\,\underbrace{y_1,\ldots,y_1}_{\ka_1\ \text{times}}\,\,\ldots,\,
\underbrace{y_n,\ldots,y_n}_{\ka_n\ \text{times}}\,\Bigr).
\end{equation}
For example,
consider the usual Schur polynomial in three variables,
associated $\la=(1,1)$:
\[
\schur_{(1,1)}(x_1,x_2,x_3)
=x_1 x_2
+x_1 x_3
+x_2 x_3.
\]
Now, let $y=(y_1,y_2)$ and $\ka=(2,1)$.
In other words, we substitute $(x_1,x_2,x_3)$
by $(y_1,y_1,y_2)$:
\[
\schur_{(1,1)}(y^{[\ka]})
=\schur_{(1,1)}(y_1^{[2]},y_2^{[1]})
=\schur_{(1,1)}(y_1,y_1,y_2)
=y_1^2 + 2 y_1 y_2.
\]
Our objective is to provide an efficient formula for $\schur_\la(y^{[\ka]})$, when $\la$ is a partition containing large components.

We use notation
$\bN\eqdef\{1,2,\ldots\}$
and $\bNz\eqdef\{0,1,2,\ldots\}$.
Given $N$ in $\bNz$, we denote by $\cP(N)$
the set of the partitions of length $\le N$.

We denote by $G_\la(y,\ka)$ the ``generalized confluent Vandermonde matrix'' associated to the partition $\la$, the list of variables $y$ and the list of repetitions $\ka$.
$G_\la(y,\ka)$ is a square matrix of order $N=|\ka|$,
where
$N\eqdef\ka_1+\cdots+\ka_n$.
The components of $G_\la(y,\ka)$ involve some powers of $y_1,\ldots,y_n$ and binomial coefficients,
see details in~Definition~\ref{defn:G}.
For $\la=\emptyset$,
$G_\emptyset(y,\ka)$
is just the usual confluent Vandermonde matrix.
Here is the main result of this paper.

\begin{thm}
\label{thm:bialternant_formula_Schur_rep}
Let $n\in\bN$,
$\ka\in\bN^n$,
$N\eqdef|\ka|$,
$\la\in\cP(N)$,
and $y_1,\ldots,y_n$ be some variables or pairwise different numbers.
Then
\begin{equation}
\label{eq:bialternant_formula_Schur_rep}
\schur_\la(y^{[\ka]})
=\frac{\det G_\la(y,\ka)}{\det G_{\emptyset}(y,\ka)}.
\end{equation}
\end{thm}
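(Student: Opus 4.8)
The plan is to deduce \eqref{eq:bialternant_formula_Schur_rep} from the classical bialternant formula \eqref{eq:schur_bialternant} by a coalescence (confluence) argument, realizing $G_\la(y,\ka)$ as a limit of ordinary alternant matrices as groups of variables merge. For pairwise different numbers $y_1,\ldots,y_n$ and a small parameter $\eps>0$, I would introduce the $N$ pairwise different arguments $x_{c,i}\eqdef y_c+i\eps$, with $1\le c\le n$ and $0\le i\le\ka_c-1$, so that the tuple $x$ tends to $y^{[\ka]}$ as $\eps\to0$. For an exponent list $m=(m_1,\ldots,m_N)$ write $A_m(x)\eqdef\det[x_k^{m_j}]_{j,k=1}^N$; the numerator and denominator in \eqref{eq:schur_bialternant} are $A_m$ with $m_j=N+\la_j-j$ and with $m_j=N-j$, respectively. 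The main observation is that, within each block of columns sharing the node $y_c$, passing to successive divided differences over the nodes $y_c,y_c+\eps,\ldots,y_c+i\eps$ and normalizing by $i!\,\eps^i$ turns the column $x\mapsto(x^{m_j})_{j=1}^N$ evaluated along the block into columns that converge, as $\eps\to0$, to $\tfrac1{i!}\,\frac{d^i}{dx^i}(x^{m_j})\big|_{x=y_c}=\bigl(\binom{m_j}{i}\,y_c^{\,m_j-i}\bigr)_{j=1}^N$, which are exactly the columns of $G_\la(y,\ka)$ (for $\la=\emptyset$, of $G_\emptyset(y,\ka)$) prescribed by Definition~\ref{defn:G}.

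The crucial bookkeeping point is that these column operations transform $A_m(x)$ into a matrix $A'_m(x)$ whose columns converge entrywise to those of $G_m(y,\ka)$, while the underlying block-triangular transition matrix has determinant $1/D(\eps)$, where $D(\eps)\eqdef\prod_{c=1}^n\prod_{i=0}^{\ka_c-1} i!\,\eps^i$ depends only on the block structure $\ka$ and not on the exponent list $m$. Hence $\det A_m(x)=D(\eps)\,\det A'_m(x)$ with $\det A'_m(x)\to\det G_m(y,\ka)$. Taking $m$ first as the numerator and then as the denominator list, the common factor $D(\eps)$ cancels in the quotient \eqref{eq:schur_bialternant}. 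Since $\schur_\la$ is a polynomial, its left-hand side tends to $\schur_\la(y^{[\ka]})$, while the right-hand side tends to $\det G_\la(y,\ka)/\det G_\emptyset(y,\ka)$, provided $\det G_\emptyset(y,\ka)\neq0$; this yields \eqref{eq:bialternant_formula_Schur_rep} for pairwise different numbers.

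Two auxiliary points then complete the argument. First, the non-vanishing of $\det G_\emptyset(y,\ka)$: this is the confluent Vandermonde determinant, which equals, up to a nonzero constant depending only on $\ka$, the product $\prod_{c<d}(y_d-y_c)^{\ka_c\ka_d}$, and is therefore nonzero whenever $y_1,\ldots,y_n$ are pairwise different; I would isolate this as a preliminary lemma, as it also guarantees that the right-hand side of \eqref{eq:bialternant_formula_Schur_rep} is well defined. Second, the passage to variables: once $\schur_\la(y^{[\ka]})\,\det G_\emptyset(y,\ka)=\det G_\la(y,\ka)$ holds for all pairwise different numerical tuples, both sides are polynomials in $y_1,\ldots,y_n$ agreeing on a Zariski-dense set, so the identity holds identically in the polynomial ring, which is the ``variables'' case.

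I expect the main obstacle to be the precise justification of the divided-difference step: one must check that the transition matrix is genuinely block-triangular with diagonal entries $1/(i!\,\eps^i)$, so that the scalar factor relating $\det A_m$ and $\det A'_m$ is exactly $D(\eps)$ and is independent of $m$, and that the entrywise limits reproduce the binomial-and-power entries of $G_\la$ in Definition~\ref{defn:G}. A clean way to organize this is to encode the coalescence through the matrix of divided differences of the monomials and to track the leading order in $\eps$ of each block separately, rather than expanding the full $N\times N$ determinant.
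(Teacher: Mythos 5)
Your proposal is correct, but it follows the coalescence route that the paper explicitly mentions in its introduction (``apply L'Hospital's rule several times'') and deliberately avoids; none of the paper's three proofs takes a limit. Your bookkeeping is sound: within each block the passage from the columns $\bigl(x_{c,i}^{m_j}\bigr)_{j=1}^N$ to successive divided differences is effected by an upper triangular transition matrix with diagonal entries $1/(i!\,\eps^i)$, so the scalar $D(\eps)$ is indeed independent of the exponent list $m$ and cancels in the quotient, and entrywise convergence of the divided differences of monomials to $\binom{m_j}{i}y_c^{\,m_j-i}$ (with the zero cases matching Definition~\ref{defn:G}, since $\binom{m_j}{i}=0$ for $0\le m_j<i$) gives convergence of the determinants. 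The paper instead argues algebraically: its first proof multiplies the Jacobi--Trudi matrix $J_\la(y^{[\ka]})$ on the right by an explicit matrix $M(y,\ka)$ built from elementary symmetric polynomials, proving $G_\la(y,\ka)=J_\la(y^{[\ka]})\,M(y,\ka)$ and extracting $\det M(y,\ka)=\det G_\emptyset(y,\ka)$ for free from the case $\la=\emptyset$, where $J_\emptyset$ is unitriangular; the second and third proofs perform exact elementary column operations driven by Lita da Silva's identity \eqref{eq:hom_LitaDaSilva}, the third being the precise algebraic counterpart of your argument --- starting from \eqref{eq:schur_bialternant}, the factors $(x_{\eta+k}-x_{\eta+t})$ are factored out and cancelled identically in the polynomial ring, after which substituting $y_q$ for each block of variables is legitimate with no limit at all. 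What your approach buys is conceptual transparency: it exhibits $G_\la(y,\ka)$ as the Hermite-type confluent degeneration of the alternant, explaining the binomial-times-power entries, and it needs no theory of complete homogeneous polynomials beyond \eqref{eq:schur_bialternant} itself. What it costs are three inputs the algebraic proofs avoid: the nonvanishing of $\det G_\emptyset(y,\ka)$ (equivalently the product formula \eqref{eq:determinant_vandermonde_confluent}, which you correctly flag as a needed preliminary lemma), a genuinely analytic limit forcing you to work first over $\bR$ or $\bC$, and the closing Zariski-density step to upgrade the numerical identity $\schur_\la(y^{[\ka]})\det G_\emptyset(y,\ka)=\det G_\la(y,\ka)$ to an identity of polynomials; by contrast, the paper's first proof never even invokes the confluent Vandermonde product formula.
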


This formula or its particular cases appear
in~\cite[Section~2]{Trench1985_2},
\cite[Section~2.5]{BG2005},
\cite[Remark~3.3]{MM2017},
but we have not found a detailed justification for the general situation.
One possible way to derive this formula is to apply L'Hospital's rule several times.
In this paper, we prefer to give more algebraic proofs.

If $1\le r<s\le n$ and $y_r^{[\ka_r]}$ interchanges with $y_s^{[\ka_s]}$,
then the sign of $\det G_\la(y^{[\ka]})$ does not always change,
see details in Remark~\ref{rem:alternating}.
So, now the word ``bialternant'' in the title of this paper does not have exactly the same sense as in~\eqref{eq:schur_bialternant}.

Applying Theorem~\ref{thm:bialternant_formula_Schur_rep}
to $\la=(m)$
we obtain the following particular case.

\begin{cor}[bialternant formula for complete homogeneous polynomials with repeated variables]
\label{cor:hom_rep_bialternant}
Let $\ka\in\bN^n$ and $m\in\bNz$. Then
\begin{equation}
\label{eq:hom_rep_bialternant}
\hom_{m}(y^{[\ka]})
= \frac{\det G_{(m)}(y, \ka)}{\det G_{\emptyset}(y, \ka)}.
\end{equation}
\end{cor}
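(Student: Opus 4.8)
The plan is to deduce Corollary~\ref{cor:hom_rep_bialternant} directly from Theorem~\ref{thm:bialternant_formula_Schur_rep} by specializing the shape to the single row $\la = (m)$. The only extra ingredient is the classical identity $\schur_{(m)} = \hom_m$, which says that the Schur polynomial of a one-row partition coincides with the complete homogeneous symmetric polynomial of the same degree.

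First I would check that $(m)$ is admissible for the theorem. Since $\ka \in \bN^n$ with $\bN = \{1,2,\ldots\}$, we have $N = |\ka| \ge n \ge 1$; the partition $(m)$ has length $1 \le N$ when $m \ge 1$ and equals $\emptyset$ when $m = 0$, so in either case $(m) \in \cP(N)$. Next I would record $\schur_{(m)} = \hom_m$, which is immediate from the Jacobi--Trudi formula~\eqref{eq:Jacobi_Trudi_formula}: for $\la = (m)$ with $m \ge 1$ the length is $l(\la) = 1$, so the right-hand side collapses to the $1 \times 1$ determinant $\det[\hom_{\la_1 - 1 + 1}] = \hom_m$, while for $m = 0$ both sides equal $1$.

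Substituting $\la = (m)$ into~\eqref{eq:bialternant_formula_Schur_rep} and replacing $\schur_{(m)}(y^{[\ka]})$ by $\hom_m(y^{[\ka]})$ then yields exactly~\eqref{eq:hom_rep_bialternant}. I do not expect a genuine obstacle here, since all the substance resides in Theorem~\ref{thm:bialternant_formula_Schur_rep}; the only point deserving comment is the identity $\schur_{(m)} = \hom_m$, which I would justify in one line via Jacobi--Trudi rather than merely cite, so that the reduction to the main theorem stays transparent and self-contained.
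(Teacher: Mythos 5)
Your proposal is correct and follows exactly the paper's route: the corollary is obtained by specializing Theorem~\ref{thm:bialternant_formula_Schur_rep} to $\la=(m)$, with the standard identity $\schur_{(m)}=\hom_m$ (which you rightly verify via the $1\times1$ Jacobi--Trudi determinant) doing the only remaining work. Your added checks that $(m)\in\cP(N)$ and that the $m=0$ case degenerates to $\la=\emptyset$ are harmless elaborations of what the paper leaves implicit.
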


The paper has the following structure.
In Section~\ref{sec:repeating}, we formalize notation~\eqref{eq:y_rep_informal_def}.
In Section~\ref{sec:basic_facts_hom}, we recall some basic facts about complete homogeneous polynomials.
Section~\ref{sec:def_G} introduces the definition of $G_\la(y,\ka)$.

In Section~\ref{sec:proof_from_Jacobi_Trudi}, we give a proof of Theorem~\ref{thm:bialternant_formula_Schur_rep} starting with Jacobi--Trudi formula~\eqref{eq:Jacobi_Trudi_formula_extended} and using a multiplication by a special matrix.

In Sections~\ref{sec:partial_reduction_of_JT_determinant}
and~\ref{sec:elimination_determinants_hom},
we explain how to transform some special determinants formed from complete homogeneous polynomials,
applying certain elementary operations to the columns of the matrices.
Section~\ref{sec:partial_reduction_of_JT_determinant}
is inspired by Lita~da~Silva~\cite{LitaDaSilva2018}.
A second proof of Theorem~\ref{thm:bialternant_formula_Schur_rep}, given in Section~\ref{sec:proof_Jacobi_Trudi_and_elementary_operations}, also begins with~\eqref{eq:Jacobi_Trudi_formula_extended} but then incorporates transformations from Sections~\ref{sec:partial_reduction_of_JT_determinant}
and~\ref{sec:elimination_determinants_hom}.

Section~\ref{sec:proof_from_classic_bialternant}
contains the third proof of Theorem~\ref{thm:bialternant_formula_Schur_rep}
which starts with the classical bialternant formula~\eqref{eq:schur_bialternant} and then involves some elementary operations on columns.

The second and third proofs are essentially based on the formula for the difference of complete homogeneous polynomials, see~Lita~da~Silva~\cite[Lemma~1]{LitaDaSilva2018};
we state it as Proposition~\ref{prop:hom_LitaDaSilva} below.

In Section~\ref{sec:computation_of_G},
we explain an algorithm to construct $G_\la(y,\ka)$
applying exponentiation by squaring and precomputing some intermediate expressions.

Finally, in Section~\ref{sec:connection_with_plethysm} we show that if all components of $\ka$ are equal
($\ka_1=\ldots=\ka_n$),
then
$\schur_\la(y^{[\ka]})$
can be written in terms of the plethysm operation.

This paper is designed as a firm foundation for the following two topics:

\begin{itemize}

\item
connections between symmetric polynomials and interpolation problems with repeating nodes
(Eisinberg, Fedele~\cite{EisinbergFedele2006}, and Rawashdeh~\cite{Rawashdeh2019}
already noticed that the inverse of the usual Vandermonde matrix can be written in terms of elementary polynomials);

\item 
determinants and minors of banded Toeplitz matrices
generated by Laurent polynomials with multiple roots
(some connections between banded Toeplitz matrices and skew Schur polynomials were established in
\cite{TracyWidom2001,A2012,AGMM2019,BumpDiaconis2002,GarciaGarciaTierz2020,MM2017}).
\end{itemize}

We also hope that there are analogs of Theorem~\ref{thm:bialternant_formula_Schur_rep} for symplectic Schur polynomials and orthogonal Schur polynomials.
Their definitions and bialternant formulas (without repeating variables)
can be found in~\cite[Section~24 and Appendix~A]{FultonHarris1991}
and~\cite[Section~5]{AGMM2019}.

\medskip
\section{Notation for repeating variables}
\label{sec:repeating}

In this section, we suppose that $n\in\bN$.
Given $\ka$ in $\bN^n$,
we denote by $|\ka|$ the sum of the elements of $\ka$:
\[
|\ka|\eqdef\sum_{p=1}^n\ka_p.
\]
Given $\ka\in\bN^n$,
let $\si(\ka)$ be the list of the partial sums of $\ka$:
\begin{equation}
\label{eq:partial_sums_def}
\si(\ka)_p \eqdef \sum_{q=1}^p \ka_q\qquad
(p\in\{0,1,\ldots,n\}).
\end{equation}
In particular, $\si(\ka)_0\eqdef0$ for each $\ka$.

\begin{defn}
\label{def:gamma}
Let $\ka \in \bN^n$, $N \eqdef |\ka|$. For each $p$ in $\{1,\ldots,|\ka|\}$, we define $\ga(\ka, p)$ as the pair $(q, r)$ defined by the following rules:
\[
q \eqdef
\max \bigl\{r\in\{1,\ldots,n\}\colon \sigma(\ka)_{r - 1} < p\bigr\},
\qquad 
r \eqdef p - \si(\ka)_{q - 1}. 
\]
\end{defn}

It is easy to see that $\gamma(\ka, p)$ from Definition~\ref{def:gamma}
is the unique pair $(q,r)$ such that
$q\in\{1,\ldots,n\}$,
$r\in\{1,\ldots,\ka_q\}$,
\begin{equation}
\label{eq:between_partial_sums}
\si(\ka)_{q-1} < p \le \si(\ka)_q,
\end{equation}
and
\begin{equation}
\label{eq: partial_sums_residue}
 p = \si(\ka)_{q - 1} + r.
\end{equation}

\begin{ex}
Let $\ka= (2, 3)$. Then 
\[
\ga(\ka, 1) = (1, 1), \quad
\ga(\ka, 2) = (1, 2), \quad
\ga(\ka, 3) = (2, 1), \quad
\ga(\ka, 4) = (2, 2), \quad
\ga(\ka, 5) = (2, 3).
\]
Indeed, for $p = 3$,
\[
\underbrace{2}_{\si(\ka)_1} < \underbrace{3}_{p} = \underbrace{2}_{\si(\ka)_1} +\underbrace{1}_{r} \leq \underbrace{5}_{\si(\ka)_2}.
\]
\hfill $\qedsymbol$
\end{ex}

\begin{defn}
Let $\ka\in\bN^n$ and $y_1,\ldots,y_n$ be some numbers or variables.
We denote by $y^{[\ka]}$ the list of length $|\ka|$
where $y_1$ is repeated $\ka_1$ times,
$y_2$ is repeated $\ka_2$ times, and so on.
Formally, for every $p$ in $\{1,\ldots,|\ka|\}$,
we put $(q,r)\eqdef\ga(\ka, p)$ and
\[
(y^{[\ka]})_p \eqdef y_q.
\]
\end{defn}

\begin{ex}
Let $y=(y_1,y_2,y_3)$ and $\ka=(3,1,4)$.
Then
\[
y^{[\ka]}=(y_1,y_1,y_1,y_2,y_3,y_3,y_3,y_3).
\]
\hfill $\qedsymbol$
\end{ex}

\medskip
\section{Basic facts about complete homogeneous polynomials}
\label{sec:basic_facts_hom}

In this section, we recall some well-known facts,
see
\cite[Section~2.2]{Egge2019},
\cite[Section~1.2]{Macdonald1995},
\cite[Section~7.5]{Stanley2001}.
Given $m$ in $\bNz$, the complete homogeneous polynomial of degree $m$ in $N$ variables $x_1,\ldots,x_N$ is defined by the following combinatorial formula:
\begin{equation}
\label{eq:hom_def}
\hom_m(x_1,\ldots,x_N)
\eqdef
\sum_{k\in\bN_0^N\colon|k|=m}
x_1^{k_1}\cdots x_N^{k_N}.
\end{equation}
This combinatorial definition
easily implies the following recurrent formula:
\begin{equation}
\label{eq:hom_recur}
\hom_{m+1}(x_1,\ldots,x_N,x_{N+1})
=\hom_{m+1}(x_1,\ldots,x_N)
+x_{N+1}\hom_m(x_1,\ldots,x_{N+1}).
\end{equation}
In what follows, we abbreviate the list $(x_1,\ldots,x_N)$ by $x$.
The generating function of the sequence $(\hom_m(x))_{m=0}^\infty$ is defined as
\[
\Homgen(x)(t)\eqdef
\sum_{m=0}^\infty\hom_m(x)t^m.
\]
The recurrent formula~\eqref{eq:hom_recur} easily implies the following recurrent formula for the generating functions:
\begin{equation}
\label{eq:Homgen_recur}
(1-x_{N+1}t)\Homgen(x,x_{N+1})(t)=\Homgen(x)(t),
\end{equation}
which yields an explicit formula for the generating function:
\begin{equation}
\label{eq:Homgen_explicit}
\Hom(x)(t)
=\frac{1}{\prod_{j=1}^N (1-x_j t)}.
\end{equation}

\begin{prop}[Lita~da~Silva, 2018]
\label{prop:hom_LitaDaSilva}
Let $x_1,\ldots,x_N,y,z$ be some numbers or variables,
and $m\in\bNz$.
Then
\begin{equation}
\label{eq:hom_LitaDaSilva}
(y - z)\hom_m(x_1, \ldots, x_N, y, z)
= \hom_{m + 1}(x_1, \ldots,x_N, y)
- \hom_{m + 1}(x_1, \ldots,x_N, z).
\end{equation}
\end{prop}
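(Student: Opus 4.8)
The plan is to prove the identity via generating functions, which handle the symmetric accumulation of variables cleanly. Recall from~\eqref{eq:Homgen_explicit} that the generating function factorizes as
\[
\Hom(x_1,\ldots,x_N,y)(t)=\frac{1}{(1-yt)\prod_{j=1}^N(1-x_jt)},
\qquad
\Hom(x_1,\ldots,x_N,z)(t)=\frac{1}{(1-zt)\prod_{j=1}^N(1-x_jt)}.
\]
First I would subtract these two generating functions and put the result over a common denominator. The numerator becomes $(1-zt)-(1-yt)=(y-z)t$, so the difference equals
\[
\Hom(x_1,\ldots,x_N,y)(t)-\Hom(x_1,\ldots,x_N,z)(t)
=\frac{(y-z)t}{(1-yt)(1-zt)\prod_{j=1}^N(1-x_jt)}.
\]
The factor on the right is exactly $(y-z)t$ times $\Hom(x_1,\ldots,x_N,y,z)(t)$, again by~\eqref{eq:Homgen_explicit} applied to the list $(x_1,\ldots,x_N,y,z)$.

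The second step is to read off the coefficient of $t^{m+1}$ on both sides. On the left, the coefficient of $t^{m+1}$ in $\Hom(x_1,\ldots,x_N,y)(t)-\Hom(x_1,\ldots,x_N,z)(t)$ is, by definition of the generating function, precisely $\hom_{m+1}(x_1,\ldots,x_N,y)-\hom_{m+1}(x_1,\ldots,x_N,z)$. On the right, multiplying the series $\sum_{k\ge0}\hom_k(x_1,\ldots,x_N,y,z)t^k$ by $(y-z)t$ shifts indices by one, so the coefficient of $t^{m+1}$ is $(y-z)\hom_m(x_1,\ldots,x_N,y,z)$. Equating the two coefficients yields~\eqref{eq:hom_LitaDaSilva} immediately.

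An equivalent and perhaps more self-contained route avoids generating functions entirely and uses only the recurrence~\eqref{eq:hom_recur}. Applying~\eqref{eq:hom_recur} to append $z$ to the list $(x_1,\ldots,x_N,y)$ gives $\hom_{m+1}(x_1,\ldots,x_N,y,z)=\hom_{m+1}(x_1,\ldots,x_N,y)+z\,\hom_m(x_1,\ldots,x_N,y,z)$, and appending $y$ to $(x_1,\ldots,x_N,z)$ gives $\hom_{m+1}(x_1,\ldots,x_N,y,z)=\hom_{m+1}(x_1,\ldots,x_N,z)+y\,\hom_m(x_1,\ldots,x_N,z,y)$. Since $\hom_m$ is symmetric in its arguments, the two trailing terms share the factor $\hom_m(x_1,\ldots,x_N,y,z)$; subtracting the two displayed equations cancels the common left-hand side and produces $\hom_{m+1}(x_1,\ldots,x_N,z)-\hom_{m+1}(x_1,\ldots,x_N,y)=(z-y)\hom_m(x_1,\ldots,x_N,y,z)$, which rearranges to the claim. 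I do not expect a genuine obstacle here: the only point requiring a moment's care is the index shift in the generating-function argument (equivalently, the correct bookkeeping of which variable is appended in the recurrence route), but both are routine once the factorization~\eqref{eq:Homgen_explicit} is in hand.
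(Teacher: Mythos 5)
Your generating-function argument is correct and is essentially the paper's own proof: the paper derives the identity $(y-z)\,t\,\Hom(x_1,\ldots,x_N,y,z)(t)=\Hom(x_1,\ldots,x_N,y)(t)-\Hom(x_1,\ldots,x_N,z)(t)$ from~\eqref{eq:Homgen_recur} (you get the same relation from the explicit product~\eqref{eq:Homgen_explicit}, a trivially equivalent route) and then equates coefficients of $t^{m+1}$, exactly as you do. Your secondary route via the recurrence~\eqref{eq:hom_recur} and symmetry of $\hom_m$ is also correct, though it is just a coefficient-level restatement of the same computation.
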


\begin{proof}
Lita~da~Silva~\cite[Lemma~1]{LitaDaSilva2018}
proved~\eqref{eq:hom_LitaDaSilva} using~\eqref{eq:hom_def}.
Here is another proof.
We apply~\eqref{eq:Homgen_recur} and get the next relation for generating functions:
\begin{equation}
\label{eq:Homgen_LitaDaSilva}
(y-z)\,t\,\Hom(x_1,\ldots,x_N,y,z)(t)
=\Hom(x_1,\ldots,x_N,y)
-\Hom(x_1,\ldots,x_N,z).
\end{equation}
Equating the coefficients of $t^{m+1}$ we obtain~\eqref{eq:hom_LitaDaSilva}.
\end{proof}

Next, we mention several equivalent formulas for
$\hom_m$ of one variable $t$ repeated $r$ times, i.e.,
$\hom_m\Bigl(\,\underbrace{t,\ldots,t}_{r}\,\Bigr)$.
The trivial case $m<0$
will also be useful.

\begin{prop}
\label{prop:hom_one_variable_with_repetitions}
Let $r\in\bN$ and $m\in\bZ$. Then
\begin{equation}
\label{eq:hom_one_variable_with_repetitions}
\hom_m(t^{[r]})
=
\binom{r+m-1}{r - 1}
\hom_m(t)
=
\begin{cases}
\binom{r+m-1}{r - 1} t^m, & m\ge0;
\\[1ex]
0, & m<0.
\end{cases}
\end{equation}
\end{prop}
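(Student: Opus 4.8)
The plan is to split according to the sign of $m$ and, for the main case, to read off the coefficient of the appropriate power of the auxiliary variable from the generating function \eqref{eq:Homgen_explicit}.

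First I would dispose of the case $m<0$. By the combinatorial definition \eqref{eq:hom_def}, the defining sum is empty when $m<0$, since there is no $k\in\bNz^N$ with $|k|=m$; hence $\hom_m=0$. This matches the value stated on the right-hand side of \eqref{eq:hom_one_variable_with_repetitions}, where the expression is declared to be $0$ for $m<0$.

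For $m\ge0$ I would specialize the explicit generating function \eqref{eq:Homgen_explicit} to $x_1=\cdots=x_r=t$, which gives
\[
\Hom(t^{[r]})(s)=\frac{1}{(1-ts)^r}.
\]
Then I expand the right-hand side by the negative binomial series
\[
\frac{1}{(1-u)^r}=\sum_{m=0}^\infty\binom{m+r-1}{r-1}u^m,
\]
with $u=ts$, and equate the coefficients of $s^m$ to obtain $\hom_m(t^{[r]})=\binom{r+m-1}{r-1}t^m$. Since $\hom_m(t)=t^m$ for $m\ge0$, this is exactly \eqref{eq:hom_one_variable_with_repetitions}.

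The computation is routine; the only substantive ingredient is the negative binomial expansion above, and there is no real obstacle. If one prefers to avoid that series, there is an equally short combinatorial route: by \eqref{eq:hom_def} every monomial $t^{k_1}\cdots t^{k_r}$ with $|k|=m$ equals $t^m$, so $\hom_m(t^{[r]})$ equals $t^m$ times the number of tuples $k\in\bNz^r$ with $|k|=m$, and this count is the stars-and-bars number $\binom{m+r-1}{r-1}$. In either approach the single point to verify is the identification of this binomial coefficient.
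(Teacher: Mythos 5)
Your argument is correct, but your primary route differs from the paper's. The paper proves the proposition in one line directly from the combinatorial definition \eqref{eq:hom_def}: with all variables equal to $t$, every summand $t^{k_1}\cdots t^{k_r}$ with $|k|=m$ collapses to $t^m$, and the number of summands is the stars-and-bars count $\binom{r+m-1}{r-1}$ --- which is exactly the alternative you sketch in your final paragraph. Your main proof instead specializes the generating function \eqref{eq:Homgen_explicit} to obtain $\Hom(t^{[r]})(s)=(1-ts)^{-r}$ and invokes the negative binomial expansion; this is valid (the derivation of \eqref{eq:Homgen_explicit} places no distinctness condition on the variables), and it is worth noting that this is precisely the mechanism the paper itself uses later, in the proof of Proposition~\ref{prop: sum_hom_by_elem_without_some_variables}, where the same expansion of $(1-y_q t)^{-r}$ is cited from \cite[Equation 5.56]{GrahamKnuthPatashnik1994}. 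What each approach buys: the paper's count is self-contained and needs no analytic identity, while your generating-function version makes the proposition a visible instance of the machinery driving the first proof of Theorem~\ref{thm:bialternant_formula_Schur_rep}. Your treatment of $m<0$ via the empty sum in \eqref{eq:hom_def} is also fine and slightly more explicit than the paper, which leaves that trivial case implicit.
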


\begin{proof}
We apply~\eqref{eq:hom_def} to $x=t^{[\ka]}$.
In this case, all summands in~\eqref{eq:hom_def} coincide, and the number of summands is
$\binom{r+m-1}{r-1}$.
\end{proof}

The same polynomial $\hom_m(t^{[r]})$ can be also expressed through the $(r-1)$th derivative of the monomial $\hom_{m+r-1}(t)$.

\begin{cor}
Let $r\in\bN$ and $m\in\bZ$. Then
\begin{equation}
\label{eq:hom_one_variable_rep_via_derivative}
\hom_m(t^{[r]})
=
\frac{1}{(r-1)!} \hom_{m + r-1}^{(r-1)}(t).
\end{equation}
\end{cor}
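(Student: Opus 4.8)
The plan is to derive~\eqref{eq:hom_one_variable_rep_via_derivative} directly from the already-established closed form~\eqref{eq:hom_one_variable_with_repetitions} by computing the right-hand side explicitly and matching the two expressions. First I would record that, by Proposition~\ref{prop:hom_one_variable_with_repetitions} applied with degree $m+r-1$ in place of $m$, we have $\hom_{m+r-1}(t) = t^{m+r-1}$ whenever $m+r-1 \ge 0$ (i.e.\ $m \ge 1-r$), and $\hom_{m+r-1}(t)=0$ otherwise. Since $\hom_{m+r-1}(t)$ is just a monomial in the single variable $t$, the $(r-1)$th derivative is a routine power-rule computation.

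The main computation is then the following: for $m \ge 1-r$ the monomial is $t^{m+r-1}$ with exponent $m+r-1 \ge 0$, and differentiating $r-1$ times gives
\begin{equation}
\label{eq:derivative_of_monomial}
\hom_{m+r-1}^{(r-1)}(t)
= (m+r-1)(m+r-2)\cdots(m+1)\, t^{m}
= \frac{(m+r-1)!}{m!}\, t^m,
\end{equation}
provided $m \ge 0$ so that all the descending factors are present and none annihilates the monomial prematurely; if $-r+1 \le m < 0$ the exponent $m+r-1$ lies in $\{0,1,\ldots,r-2\}$, so taking $r-1$ derivatives of a polynomial of degree at most $r-2$ yields $0$, matching the lower branch of~\eqref{eq:hom_one_variable_with_repetitions}. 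Dividing~\eqref{eq:derivative_of_monomial} by $(r-1)!$ produces $\frac{(m+r-1)!}{m!\,(r-1)!}\, t^m = \binom{r+m-1}{r-1} t^m$, which is exactly $\hom_m(t^{[r]})$ by Proposition~\ref{prop:hom_one_variable_with_repetitions}. This settles the statement in all cases.

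The only subtlety, and the step I would watch most carefully, is the bookkeeping of the boundary cases where $m$ is negative. One must check that the falling-factorial formula and the vanishing regime agree across the breakpoints: specifically that for $m<0$ the derivative genuinely vanishes (because we differentiate a monomial of too low degree, or because the underlying $\hom$ is identically zero when $m+r-1<0$), and that the surviving binomial coefficient $\binom{r+m-1}{r-1}$ is likewise zero there. Since $\binom{r+m-1}{r-1}=0$ for $-r+1 \le m \le -1$ (the top index is a nonnegative integer smaller than $r-1$) and both sides vanish for $m < 1-r$, the two descriptions are consistent, so no special casing beyond a single remark is needed. Because the heavy lifting is already contained in Proposition~\ref{prop:hom_one_variable_with_repetitions}, there is no real obstacle here; the corollary is essentially a reinterpretation of the same binomial coefficient as a normalized iterated derivative of a monomial.
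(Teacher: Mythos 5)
Your proposal is correct and is essentially the intended argument: the paper states this corollary without a separate proof, presenting it as an immediate consequence of Proposition~\ref{prop:hom_one_variable_with_repetitions}, and your computation (power rule on the monomial $t^{m+r-1}$, division by $(r-1)!$, identification of $\frac{(m+r-1)!}{m!\,(r-1)!}$ with $\binom{r+m-1}{r-1}$) is exactly that verification. Your careful treatment of the boundary regimes $-r+1\le m<0$ (degree too low, so the $(r-1)$th derivative vanishes, matching the vanishing binomial coefficient) and $m<1-r$ (where $\hom_{m+r-1}$ is already zero) fills in the only details the paper leaves implicit.
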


\medskip
\section{\texorpdfstring{Definition of the matrix $\boldsymbol{G_{\la}(y,\ka)}$}{Definition of the matrix G}}
\label{sec:def_G}

\begin{defn}[generalized confluent Vandermonde matrix associated to an integer partition and some variables with repetitions]
\label{defn:G}
Let $\ka\in\bN^n$,
$N\eqdef|\ka|$,
and $\la\in\cP(N)$.
We denote by $G_\la(y,\ka)$ the $N\times N$ matrix with the following components.
Given $j,p$ in $\{1,\ldots,N\}$,
we put $(q, r) \eqdef \ga_\ka(p)$ and
\begin{equation}
\label{eq:G_def}
G_\la(y, \ka)_{j,p}
\eqdef 
\begin{cases}
\displaystyle
\binom{N+\la_j - j}{r-1}
y_q^{N+\la_j - j - r + 1},
& \text{if}\ N+\la_j - j - r + 1 \ge 0;
\\[1.5ex]
0, & \text{otherwise}.
\end{cases}
\end{equation}
We assume the usual convention that $\la_j=0$ if $j$ exceeds the length of $\la$.
\end{defn}

Using Proposition~\ref{prop:hom_one_variable_with_repetitions} and Corollary~\ref{eq:hom_one_variable_rep_via_derivative}
we get other equivalent forms of~\eqref{eq:G_def}:
\begin{align}
\label{eq:G_def_hom}
G_\la(y, \ka)_{j,p}
&=
\binom{N+\la_j-j}{r-1}
\hom_{N+\la_j- j - r + 1}(y_{q}),
\\[1ex]
\label{eq:G_def_deriv}
G_\la(y, \ka)_{j,p}
&=
\frac{1}{(r-1)!}\,
\hom_{N+\la_j - j}^{(r-1)}(y_{q}),
\\[1ex]
\label{eq:G_def_hom_rep}
G_\la(y, \ka)_{j,p}
&=
\hom_{N+\la_j- j - r +1}(y_{q}^{[r]}).
\end{align}

\begin{rem}
With the zero-based numbering ($0 \leq j,p < N$ and $0\leq r < \ka_q$), \eqref{eq:G_def} takes the following form:
\begin{equation}
\label{eq:G0_def}
G_\la(y, \ka)_{j,p}
\eqdef 
\begin{cases}
\displaystyle
\binom{N+\la_j - j - 1}{r}
y_q^{N+\la_j - j - r - 1},
& \text{if}\ N + \la_j - j - r - 1 \ge 0;
\\[1.5ex]
0, & \text{otherwise}.
\end{cases}
\end{equation}
\end{rem}

\begin{ex}
For $\ka = (3, 2)$, $G_\la(y, \ka)$ is
\[
\begin{bmatrix}
\binom{\la_1 + 4}{0}\hom_{\la_1 + 4}(y_1) & \binom{\la_1 + 4}{1}\hom_{\la_1 + 3}(y_1) & \binom{\la_1 + 4}{2}\hom_{\la_1 + 2}(y_1) & \binom{\la_1 + 4}{0}\hom_{\la_1 + 4}(y_2) & \binom{\la_1 + 4}{1}\hom_{\la_1 + 3}(y_2)
\\[1ex]
\binom{\la_2 + 3}{0}\hom_{\la_2 + 3}(y_1) & \binom{\la_2 + 3}{1}\hom_{\la_2 + 2}(y_1) & \binom{\la_2 + 3}{2}\hom_{\la_2 + 1}(y_1) & \binom{\la_2 + 3}{0}\hom_{\la_2 + 3}(x_4) & \binom{\la_2 + 3}{1}\hom_{\la_2 + 2}(y_2)
\\[1ex]
\binom{\la_3 + 2}{0}\hom_{\la_3 + 2}(y_1) & \binom{\la_3 + 2}{1}\hom_{\la_3 + 1}(y_1) & \binom{\la_3 + 2}{2}\hom_{\la_3}(y_1) & \binom{\la_3 + 2}{0}\hom_{\la_3 + 2}(y_2) & \binom{\la_3 + 2}{1}\hom_{\la_3 + 1}(y_2)
\\[1ex]
\binom{\la_4 + 1}{0}\hom_{\la_4 + 1}(y_1) & \binom{\la_4 + 1}{1}\hom_{\la_4}(y_1) & \binom{\la_4 + 1}{2}\hom_{\la_4 - 1}(y_1) & \binom{\la_4 + 1}{0}\hom_{\la_4 + 1}(y_2) & \binom{\la_4 + 1}{1}\hom_{\la_4}(y_2)
\\[1ex]
\binom{\la_5}{0}\hom_{\la_5}(y_1) & \binom{\la_5}{1}\hom_{\la_5 - 1}(y_1) & \binom{\la_5}{2}\hom_{\la_5 - 2}(y_1) & \binom{\la_5}{0}\hom_{\la_5}(y_2) & \binom{\la_5}{1}\hom_{\la_5 - 1}(y_2)
\end{bmatrix}.
\]
\hfill $\qedsymbol$
\end{ex}

\begin{rem}[confluent Vandermonde matrix]
Consider the particular case where $\la=\emptyset$.
$G_{\emptyset}(y,\ka)$ is called the
\emph{confluent Vandermonde matrix} associated to the points $y_1,\ldots,y_n$ and multiplicities $\ka_1,\ldots,\ka_n$.
It is well known 
(see, for example, \cite{HaGibson1980}) that
\begin{equation}
\label{eq:determinant_vandermonde_confluent}
\det G_{\emptyset}(y,\ka)
= (-1)^{N(N-1)/2}
\prod_{1\le j<k\le n}
(y_k - y_j)^{\ka_j \ka_k}.
\end{equation}
If the rows of this matrix are written in the inverse order,
like in~\cite{HaGibson1980},
then the factor $(-1)^{N(N-1)/2}$ disappears.
Notice that
\[
\frac{N(N-1)}{2}
=
\frac{1}{2}
\left(\sum_{j=1}^n \ka_j\right)
\left(\sum_{k=1}^n \ka_k\right)
-
\frac{1}{2}
\sum_{j=1}^n \ka_j
\\
=\sum_{j=1}^n \frac{\ka_j^2-\ka_j}{2}
+ \sum_{j=1}^n\sum_{k=j+1}^n
\ka_j \ka_k.
\]
Therefore,~\eqref{eq:determinant_vandermonde_confluent} admits the following equivalent form:
\begin{equation}
\label{eq:determinant_vandermonde_confluent_another_form}
\det G_{\emptyset}(y,\ka)
= (-1)^{\sum_{j=1}^n \frac{\ka_j (\ka_j-1)}{2}}
\prod_{1\le j<k\le n}
(y_j - y_k)^{\ka_j \ka_k}.
\end{equation}
\end{rem}

\begin{ex}
Let $n = 2$ and $\ka=(3,2)$.
Then
\begin{align*}
G_\emptyset(y,\ka)
=
\begin{bmatrix}
y_1^4 & 4 y_1^3 & 6 y_1^2 & y_2^4 & 4y_2^{3} \\
y_1^3 & 3y_1^{2} & 3y_1 & y_2^3 & 3y_2^2 \\
y_1^2 & 2y_1 & 1 & y_2^2 & 2y_2 \\
y_1 & 1 & 0 & y_2 & 1 \\
1 & 0 & 0 & 1 & 0
\end{bmatrix},\qquad
\det G_\emptyset(y,\ka)
=(y_2-y_1)^6.
\end{align*}
\hfill$\qedsymbol$
\end{ex}

\begin{rem}
\label{rem:alternating}
The numerator and denominator of the quotient in the right-hand side of~\eqref{eq:schur_bialternant} are alternating polynomials in two senses:
they vanish if $x_r=x_s$ for some $r,s$ with $r\ne s$,
and they change the sign if $x_r$ interchanges with $x_s$.
Now consider the determinants of $G_\la(y^{[\ka]})$ and $G_\emptyset(y^{[\ka]})$
which appear in the numerator and denominator of the quotient in the right-hand side of~\eqref{eq:bialternant_formula_Schur_rep}.
Of course, they vanish if some of $y_1,\ldots,y_n$ coincide.
Suppose that $1\le r<s\le n$ and $y^{[\ka_r]}$ interchanges with $y^{[\ka_s]}$.
It is easy to see that
$G_\la(\ldots,y_s^{[\ka_s]},\ldots,y_r^{[\ka_r]},\ldots)$ can be obtained from
$G_\la(\ldots,y_r^{[\ka_r]},\ldots,y_s^{[\ka_s]},\ldots)$ by applying
$(\ka_r+\ka_s)(\ka_{r+1}+\cdots+\ka_{s-1})+\ka_r \ka_s$ column switching.
Therefore,
\begin{equation}
\label{eq:alternating}
\begin{aligned}
\det G_\la(\ldots,y_s^{[\ka_s]},\ldots,y_r^{[\ka_r]},\ldots)
&=
(-1)^{(\ka_r+\ka_s)(\ka_{r+1}+\cdots+\ka_{s-1})+\ka_r \ka_s}
\\
&\quad\times
\det G_\la(\ldots,y_r^{[\ka_r]},\ldots,y_s^{[\ka_s]},\ldots).
\end{aligned}
\end{equation}
\end{rem}

\medskip
\section{First proof: using Jacobi--Trudi formula and matrix multiplication}
\label{sec:proof_from_Jacobi_Trudi}

This section generalizes the reasoning from Macdonald~\cite[Chapter~1, (3.6)]{Macdonald1995}.

For $1 \leq q \leq n$,
let
$b_q\eqdef [\delta_{q,j}]_{j=1}^n$.
Then the list $(y_1^{[\ka_1]}, \ldots, y_{q-1}^{[\ka_{q-1}]}, y_q^{[\ka_q - r]}, y_{q+1}^{[\ka_{q+1}]}, \ldots, y_n^{[\ka_n]})$ can be written as $y^{[\ka - rb_q]}$.

\begin{prop}\label{prop: sum_hom_by_elem_without_some_variables}
Let $q\in \{1,\ldots, n\}$, $d\in \bZ$. Then
\begin{equation}\label{eq:sum_homrep_by_erep}
\sum_{l\in \bZ } (-1)^l \hom_{d - l}(y^{[\ka]}) \elem_l(y^{[\ka - rb_q]}) = \binom{r + d - 1}{r-1}\; \hom_d(y_q) = \hom_d(y_q^{[r]}).
\end{equation}
The sum in the left-hand side of~\eqref{eq:sum_homrep_by_erep} contains a finite number of nonzero terms. 
\end{prop}

\begin{proof}
For $d < 0$, all expressions in \eqref{prop: sum_hom_by_elem_without_some_variables} are zero. Suppose that $d\geq 0$.
We consider the generating functions $\Hom(x)(t)$  with $y^{[\ka]}$ instead of $x$ and $\Elem(x)(t)$ with $y^{[\ka - rb_q]}$ instead of $x$:
\begin{align}
\label{eq: Homrep}
\Hom(y^{[\ka]})(t)
&=
\sum_{s=0}^\infty\hom_s(y^{[\ka]})t^s 
=
\prod_{m=1}^n \frac{1}{(1 - y_mt)^{\ka_m}}
\\[1ex]
\label{eq: Elemrep}
\Elem(y^{[\ka - rb_q]})(t) 
&= \sum_{l = 0}^{\infty}(-1)^l\elem_l(y^{[\ka - rb_q]})t^l =
(1 + y_qt)^{\ka_q - r} 
\prod_{\substack{1\leq m \leq n \\ m\neq q}} (1 + y_mt)^{\ka_m}.
\end{align}
In fact, the sum over $l$ in~\eqref{eq: Elemrep} can be taken up to $N - r$,
but we prefer to work without this restriction.
Then
\[
    \Hom(y^{[\ka]})(t) \Elem(y^{[\ka - rb_q]})(-t) 
    = 
    \frac{1}{(1 - y_qt)^r}.
\]
In the left-hand side we substitute \eqref{eq: Homrep} and \eqref{eq: Elemrep}, and the fraction in the right-hand side has a well-known expansion into a power series~\cite[Equation 5.56]{GrahamKnuthPatashnik1994}:
\[
\sum_{s = 0}^\infty \hom_s(y^{[\ka]})t^s \sum_{l = 0}^{\infty}(-1)^l\elem_l(y^{[\ka - rb_q]})t^l = \sum_{m = 0}^\infty \binom{r + m - 1}{r-1}\;y_q^{m}t^m.
\]
By picking out the coefficient of $t^d$ on either side, we obtain the result.
\end{proof}

\begin{defn}
Given a list of numbers or variables $x_1,\ldots,x_N$
and $\la$ in $\cP(N)$,
we denote by $J_\la(x)$ the corresponding Jacobi--Trudi matrix:
\begin{equation}
\label{eq:Jacobi_Trudi_matrix}
J_{\la}(x)
\eqdef
\bigl[\hom_{\la_j - j + k} (x)\bigr]_{j,k=1}^N.
\end{equation}
Here, as usual, we put $\la_j\eqdef 0$ for $j>\ell(\la)$.
\end{defn}

\begin{defn}
For every $\ka$ in $\bN^n$,
we denote by $M(y,\ka)$
the $N \times N$ matrix with the following components. Given $k, p$ in $\{1,\ldots,N\}$, we put
$(q,r) \eqdef \ga_{\ka}(p)$ and
\begin{equation}
\label{eq:M_def}
(M(y, \ka))_{k,p}
\eqdef (-1)^{N - k - r + 1} \elem_{N - k - r + 1}(y^{[\ka - rb_q]}).
\end{equation}
\end{defn}

\begin{prop}
\label{prop: product_G_equal_H_M}
Let $\ka\in \bN^n$,
$N\eqdef|\ka|$,
and
$\la\in\cP(N)$.
Then
\begin{equation}
\label{eq:product_G_equal_H_M}
G_{\la}(y, \ka)
= J_{\la}(y^{[\ka]}) M(y, \ka).
\end{equation}
\end{prop}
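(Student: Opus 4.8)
The plan is to verify the matrix identity \eqref{eq:product_G_equal_H_M} entry by entry and reduce it to Proposition~\ref{prop: sum_hom_by_elem_without_some_variables}. Fix $j,p\in\{1,\ldots,N\}$ and write $(q,r)\eqdef\ga_\ka(p)$. Using the form~\eqref{eq:G_def_hom_rep} of $G_\la$, the target entry is
\[
G_\la(y,\ka)_{j,p} = \hom_{N+\la_j-j-r+1}\bigl(y_q^{[r]}\bigr).
\]
On the other hand, by the definitions~\eqref{eq:Jacobi_Trudi_matrix} and~\eqref{eq:M_def}, the $(j,p)$ entry of the product $J_\la(y^{[\ka]})\,M(y,\ka)$ equals
\[
\sum_{k=1}^N \hom_{\la_j-j+k}(y^{[\ka]})\,(-1)^{N-k-r+1}\,\elem_{N-k-r+1}\bigl(y^{[\ka-rb_q]}\bigr).
\]
Thus the whole statement reduces to showing that this sum equals $\hom_{N+\la_j-j-r+1}(y_q^{[r]})$, which is precisely the shape of the right-hand side of~\eqref{eq:sum_homrep_by_erep}.

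First I would introduce the substitution $l\eqdef N-k-r+1$ (equivalently $k=N-r+1-l$), which is a bijection between $k\in\{1,\ldots,N\}$ and $l\in\{1-r,\ldots,N-r\}$. Under this substitution the degree of the homogeneous polynomial becomes $\la_j-j+k = (N+\la_j-j-r+1)-l$, the sign becomes $(-1)^l$, and the degree of the elementary polynomial becomes $l$. Setting $d\eqdef N+\la_j-j-r+1$, the entry rewrites as
\[
\sum_{l=1-r}^{N-r} (-1)^l\,\hom_{d-l}(y^{[\ka]})\,\elem_l\bigl(y^{[\ka-rb_q]}\bigr),
\]
which is exactly the left-hand side of~\eqref{eq:sum_homrep_by_erep} restricted to this finite window of indices $l$.

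The remaining step, and the only point requiring care, is to justify that replacing the finite range $1-r\le l\le N-r$ by the full range $l\in\bZ$ leaves the sum unchanged, so that Proposition~\ref{prop: sum_hom_by_elem_without_some_variables} applies directly. This follows from the vanishing of the elementary-polynomial factor outside the window: one has $\elem_l(y^{[\ka-rb_q]})=0$ for $l<0$ (in particular for every $l\le-r<1-r$, using $r\ge1$), and $\elem_l(y^{[\ka-rb_q]})=0$ for $l>N-r$ because $y^{[\ka-rb_q]}$ is a list of only $|\ka-rb_q|=N-r$ entries. Hence every term with $l\notin\{1-r,\ldots,N-r\}$ already vanishes, and the finite sum coincides with $\sum_{l\in\bZ}(-1)^l\hom_{d-l}(y^{[\ka]})\elem_l(y^{[\ka-rb_q]})$. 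Applying~\eqref{eq:sum_homrep_by_erep} with $d=N+\la_j-j-r+1$ then yields $\hom_d(y_q^{[r]})=G_\la(y,\ka)_{j,p}$, completing the entrywise check. I expect the main obstacle to be purely bookkeeping: keeping the three index conventions ($k$, the shifted variable $l$, and the pair $(q,r)=\ga_\ka(p)$) consistent, and confirming the boundary terms vanish — there is no substantive analytic difficulty beyond correctly lining up the two summation ranges.
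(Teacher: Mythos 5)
Your proof is correct and takes essentially the same route as the paper's: an entrywise computation of the product, the index change $l = N-k-r+1$, extension of the summation range to all of $\bZ$ using the vanishing of $\elem_l\bigl(y^{[\ka-rb_q]}\bigr)$ outside $\{0,\ldots,N-r\}$, and a direct application of Proposition~\ref{prop: sum_hom_by_elem_without_some_variables} with $d=N+\la_j-j-r+1$. If anything, your justification of the boundary terms is slightly more explicit than the paper's, which extends the sum before substituting; the two orderings are equivalent.
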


\begin{proof}
Let $j,p \in \{1,\ldots, N\}$
and $(q,r) = \ga_{\ka}(p)$.
We consider the $(j,p)$th component of the product $J_\la(y^{[\ka]}) M(y,\ka)$:
\begin{align*}
(J_{\la}(y^{[\ka]})
M(y, \ka))_{j,p} 
&=
\sum_{k = 1}^N \hom_{\la_j - j + k} (y^{[\ka]})
(-1)^{N - k - r + 1}
\elem_{N - k - r + 1} (y^{[\ka - rb_q]}).
\end{align*}
The sum can be extended to $\bZ$ because $\elem_{N - k - r + 1} (y^{[\ka - rb_q]})=0$ for $k<0$ or $k>N$:
\begin{align*}
(J_{\la}(y^{[\ka]})
M(y, \ka))_{j,p} 
&=
\sum_{k \in \bZ} \hom_{\la_j - j + k} (y^{[\ka]})
(-1)^{N - k - r + 1}
\elem_{N - k - r + 1} (y^{[\ka - rb_q]}).
\end{align*}
Make the change of variables
$l = N - k -r + 1$:
\begin{align*}
(J_{\la}(y^{[\ka]})
M(y, \ka))_{j,p} 
&=
\sum_{l \in \bZ} \hom_{\la_j - j + N - l - r + 1} (y^{[\ka]}) (-1)^{N - k - r + 1}
\elem_{l} (y^{[\ka - rb_q]}) .
\end{align*}
By Proposition~\ref{prop: sum_hom_by_elem_without_some_variables}, this sum equals
$\hom_{\la_j - j + N - r + 1}(y_q^{[r]})$,
which is
$G_\la(y, \ka)_{j,p}$.
\end{proof}

\begin{proof}[Proof of Theorem \ref{thm:bialternant_formula_Schur_rep}]
Take determinants in
\eqref{eq:product_G_equal_H_M}:
\begin{equation}
\label{eq:detG_detJ_detM}
\det G_{\la}(y, \ka)
= \det J_{\la}(y^{[\ka]})
\det M(y, \ka).
\end{equation}
First, consider
$\la = \emptyset$.
Then $J_{\emptyset}(y^{[\ka]})$ is a unitriangular matrix.
Therefore,
$\det J_{\emptyset}(y^{[\ka]}) = 1$ and 
\begin{equation}
\label{eq:detM}
\det M(y,\ka)
= \det G_{\emptyset}(y^{[\ka]}).
\end{equation}
For general $\la$,
we apply~\eqref{eq:Jacobi_Trudi_formula_extended},
\eqref{eq:detG_detJ_detM},
and~\eqref{eq:detM}:
\[
 \schur_{\la}(y^{[\ka]})
 = \det J_\la(y^{[\ka]})
 =
 \frac{\det G_\la(y,\ka)}%
 {\det M(y,\ka)}
 = \frac{\det G_\la(y,\ka)}%
 {\det G_{\emptyset}(y,\ka)}.
 \qedhere
\] 
\end{proof}

\begin{ex}
\label{ex:11_21}
Let $n=2$,
$\ka=(2,1)$,
and $y=(y_1,y_2)$.
Then
\[
M(y, \ka)
=
\begin{bmatrix}
\elem_2(y_1,y_2) & -\elem_1(y_2) & \elem_2(y_1,y_1) \\
- \elem_1(y_1,y_2) & \elem_0(y_2) & - \elem_1(y_1,y_1) \\
\elem_0(y_1,y_2) & 0 & \elem_0(y_1,y_1) \\
\end{bmatrix}
=
\begin{bmatrix}
y_1y_2 & -y_2 & y_1^2 \\
-(y_1 + y_2) & 1 & -2y_1 \\
1 & 0 & 1 \\
\end{bmatrix}.
\]
According to~\eqref{eq:determinant_vandermonde_confluent},
\[
\det M(y,\ka)
=\det G_{\emptyset}(y,\ka)
=-(y_1 - y_2)^2.
\]
Take $\la=(1,1)$.
We identify this partition with $(1,1,0)$.
Then
\begin{align*}
G_\la(y,\ka)
&=
\begin{bmatrix}
\binom{3}{0} \hom_{3}(y_1) & \binom{3}{1}\hom_{2}(y_1) & \binom{3}{0}\hom_{3}(y_2)
\\[0.5ex]
\binom{2}{0} \hom_{2}(y_1) & \binom{2}{1}\hom_{1}(y_1) & \binom{2}{0}\hom_2(y_2)
\\[0.5ex]
\binom{0}{0}\hom_{0}(y_1) & \binom{0}{1}\hom_{-1}(y_1) & \binom{0}{0}\hom_{0}(y_2)
\end{bmatrix}
=
\begin{bmatrix}
y_1^3 & 3\,y_1^2 & y_2^3 \\
y_1^2 & 2\,y_1 & y_2^2 \\
1 & 0 & 1
\end{bmatrix},
\end{align*}
and
\begin{align*}
J_\la(y^{[\ka]})
&=
\begin{bmatrix}
\hom_1(y_1,y_1,y_2) &
\hom_2(y_1,y_1,y_2) &
\hom_3(y_1,y_1,y_2)
\\
\hom_{0}(y_1,y_1,y_2) &
\hom_1(y_1,y_1,y_2) &
\hom_2(y_1,y_1,y_2) &
\\
\hom_{-2}(y_1,y_1,y_2) &
\hom_{-1}(y_1,y_1,y_2) &
\hom_0(y_1,y_1,y_2)
\end{bmatrix}
\\[1ex]
&=
\begin{bmatrix}
2y_1 + y_2 & 3y_1^2 + 2y_1y_2 + y_2^2 & 4y_1^3 + 3y_1^2y_2 + 2y_1y_2^2 + y_2^3 \\
1 & 2y_1 + y_2 & 3y_1^2 + 2y_1y_2 + y_2^2 \\
0 & 0 & 1
\end{bmatrix}.
\end{align*}
The determinants of these matrices are easy to compute:
\[
\det G_\la(y,\ka)
= -y_1(y_1 - y_2)^2(y_1 + 2y_2),\qquad
\det J_\la(y^{[\ka]})
=
y_1(y_1 + 2y_2).
\]
Therefore,
\[
\schur_{(1,1)}(y_1,y_1,y_2)
=\frac{\det G_\la(y,\ka)}{\det M(y,\ka)}
=y_1^2+2y_1 y_2
=\det J_\la(y^{[\ka]}).
\]
\hfill$\qedsymbol$
\end{ex}

\medskip
\section{Partial reduction of the Jacobi--Trudi determinant}
\label{sec:partial_reduction_of_JT_determinant}

Given a square matrix,
we employ a short notation for the elementary column operations:
\begin{itemize}
\item $\Mul(j, \al)$
means ``multiply the $j$th column by $\al$ and divide the determinant by $\al$''.
\item $\Add(j, k, \al)$
means ``add to the $j$th column the $k$th column multiplied by $\al$''.
\end{itemize}
In this section,
we start with the determinant
of $J_\la(x_1,\ldots,x_N)$,
apply some elementary operations of type $\Add(j, k, \al)$,
and simplify the results with~\eqref{eq:hom_recur}.
Thereby, we reduce the number of variables in some columns of the Jacobi--Trudi determinant.

We use the following short notation for the columns that appear in the Jacobi--Trudi matrix.

\begin{defn}
\label{def:column_h}
Let $N\in\bN$, $q\in\bNz$,
$\la\in\cP(N)$,
$m\in\bN$,
and $t_1,\ldots,t_m$ be some variables or numbers.
We denote by
$\columnhom_{q,\la}^N(t_1,\ldots,t_m)$
the following column vector:
\begin{equation}
\label{eq:column_h}
\columnhom_{q, \la}^N(t_1,\ldots, t_m)
\eqdef \bigl[
\hom_{q+\la_j-j}(t_1,\ldots,t_m) 
\bigl]_{j=1}^{N}.
\end{equation}
\end{defn}

With this notation,~\eqref{eq:hom_recur} 
implies the following vectorial identity:
\begin{equation}
\label{eq:hom_recur_column}
\columnhom_{q+1,\la}^N(x_p,\ldots,x_N)
-x_p
\columnhom_{q,\la}^N(x_p,\ldots,x_N)
=\columnhom_{q+1,\la}^N(x_{p+1},\ldots,x_N).
\end{equation}

\begin{ex}
Let $N = 3$,
$\la$ be a partition of length $\le 3$,
and $x=(x_1,x_2,x_3)$.
Then
\begin{align*}
\schur_\la(x)
&=
\det
\begin{bmatrix}
\hom_{\la_1}(x_1,x_2,x_3) & \hom_{\la_1+1}(x_1,x_2,x_3) &
\hom_{\la_1+2}(x_1,x_2,x_3)
\\
\hom_{\la_2-1}(x_1,x_2,x_3) &
\hom_{\la_2}(x_1,x_2,x_3) &
\hom_{\la_2+1}(x_1,x_2,x_3)
\\
\hom_{\la_3-2}(x_1,x_2,x_3) &
\hom_{\la_3-1}(x_1,x_2,x_3) &
\hom_{\la_3}(x_1,x_2,x_3)
\end{bmatrix}
\\[1ex]
&=
\det \Bigl[
\columnhom_{1,\la}^3(x_1,x_2,x_3)\ 
\columnhom_{2,\la}^3(x_1,x_2,x_3)\ 
\columnhom_{3,\la}^3(x_1,x_2,x_3) 
\Bigr].
\end{align*}
Apply operation $\Add(3,2,-x_1)$ and formula~\eqref{eq:hom_recur_column} with $p=1$ and $q=2$.
Then the determinant transforms to the following form:
\begin{align*}
\schur_\la(x)
&=
\det
\begin{bmatrix}
\hom_{\la_1}(x_1,x_2,x_3) & \hom_{\la_1+1}(x_1,x_2,x_3) &
\hom_{\la_1+2}(x_2,x_3)
\\
\hom_{\la_2-1}(x_1,x_2,x_3) &
\hom_{\la_2}(x_1,x_2,x_3) &
\hom_{\la_2+1}(x_2,x_3)
\\
\hom_{\la_3-2}(x_1,x_2,x_3) &
\hom_{\la_3-1}(x_1,x_2,x_3) &
\hom_{\la_3}(x_2,x_3)
\end{bmatrix}
\\[1ex]
&=
\det
\Bigl[
\columnhom_{1,\la}^3(x_1,x_2,x_3)\ 
\columnhom_{2,\la}^3(x_1,x_2,x_3)\ 
\columnhom_{3,\la}^3(x_2,x_3)
\Bigr].
\end{align*}
Apply operation $\Add(2,1,-x_1)$ and formula~\eqref{eq:hom_recur_column} with $p=1$ and $q=1$.
We obtain
\begin{align*}
\schur_\la(x)
&=
\det
\begin{bmatrix}
\hom_{\la_1}(x_1,x_2,x_3) & \hom_{\la_1+1}(x_2,x_3) &
\hom_{\la_1+2}(x_2,x_3)
\\
\hom_{\la_2-1}(x_1,x_2,x_3) &
\hom_{\la_2}(x_2,x_3) &
\hom_{\la_2+1}(x_2,x_3)
\\
\hom_{\la_3-2}(x_1,x_2,x_3) &
\hom_{\la_3-1}(x_2,x_3) &
\hom_{\la_3}(x_2,x_3)
\end{bmatrix}
\\[1ex]
&=
\det
\Bigl[
\columnhom_{1,\la}^3(x_1,x_2,x_3)\ 
\columnhom_{2,\la}^3(x_2,x_3)\ 
\columnhom_{3,\la}^3(x_2,x_3) 
\Bigr].
\end{align*}
Finally, apply operation
$\Add(3,2,-x_2)$
and formula~\eqref{eq:hom_recur_column}
with $p=2$ and $q=2$:
\begin{align*}
\schur_\la(x)
&=
\det
\begin{bmatrix}
\hom_{\la_1}(x_1,x_2,x_3) & \hom_{\la_1+1}(x_2,x_3) &
\hom_{\la_1+2}(x_3)
\\
\hom_{\la_2-1}(x_1,x_2,x_3) &
\hom_{\la_2}(x_2,x_3) &
\hom_{\la_2+1}(x_3)
\\
\hom_{\la_3-2}(x_1,x_2,x_3) &
\hom_{\la_3-1}(x_2,x_3) &
\hom_{\la_3}(x_3)
\end{bmatrix}
\\[1ex]
&=
\det
\Bigl[
\columnhom_{1,\la}^3(x_1,x_2,x_3)\ 
\columnhom_{2,\la}^3(x_2,x_3)\ 
\columnhom_{3,\la}^3(x_3)\ 
\Bigr].
\end{align*}
\hfill$\qedsymbol$
\end{ex}

\begin{prop}
\label{prop:partial_reduction_JT}
Let $N\in\bN$,
$\la \in \cP(N)$,
and $x=(x_1,\ldots,x_N)$.
Then
\begin{equation}
\label{eq:partial_reduction_JT}
\schur_\la(x)
=
\det
\Bigl[
\hom_{\la_j-j+k}(x_k,\ldots,x_N)
\Bigr]_{j,k=1}^N.
\end{equation}
\end{prop}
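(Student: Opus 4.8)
The plan is to start from the Jacobi--Trudi expression~\eqref{eq:Jacobi_Trudi_formula_extended}, written columnwise as
\[
\schur_\la(x)=\det\bigl[\columnhom_{1,\la}^N(x_1,\ldots,x_N)\ \cdots\ \columnhom_{N,\la}^N(x_1,\ldots,x_N)\bigr],
\]
and to reduce the variable list in the $k$th column from $(x_1,\ldots,x_N)$ down to $(x_k,\ldots,x_N)$ using only operations of type $\Add$, which leave the determinant unchanged. The single tool is the vectorial recurrence~\eqref{eq:hom_recur_column}: subtracting $x_p$ times a column with shift $q$ and variable list $(x_p,\ldots,x_N)$ from a neighbouring column with shift $q+1$ and the same variable list deletes the leading variable $x_p$ from the latter, turning it into $\columnhom_{q+1,\la}^N(x_{p+1},\ldots,x_N)$.

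Concretely, I would run the nested loop already illustrated in the preceding $N=3$ example: for $p=1,\ldots,N-1$, and for $k=N,N-1,\ldots,p+1$ taken in this descending order, apply $\Add(k,k-1,-x_p)$. The descending inner order is the crucial point: the step acting on column $k$ consumes column $k-1$ as its helper, so processing columns from right to left guarantees that this helper has not yet been modified in the current $p$-round and therefore still carries exactly the variable list $(x_p,\ldots,x_N)$ demanded by~\eqref{eq:hom_recur_column}.

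The verification is an induction on $p$ with the invariant that, after the $p$th outer round, column $k$ equals $\columnhom_{k,\la}^N(x_k,\ldots,x_N)$ for $k\le p$ and $\columnhom_{k,\la}^N(x_{p+1},\ldots,x_N)$ for $k>p$. The base case $p=0$ is the untouched Jacobi--Trudi matrix. In the inductive step, at the moment $\Add(k,k-1,-x_p)$ is applied (with $k\ge p+1$), column $k$ still has variable list $(x_p,\ldots,x_N)$ because it has not yet been altered in this round, and column $k-1$ likewise has $(x_p,\ldots,x_N)$ since $k-1\ge p$; hence~\eqref{eq:hom_recur_column} with this $p$ and $q=k-1$ sends column $k$ to $\columnhom_{k,\la}^N(x_{p+1},\ldots,x_N)$. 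Column $p$ is only read as a helper (at the last step $\Add(p+1,p,-x_p)$), so it stays in its reduced form $\columnhom_{p,\la}^N(x_p,\ldots,x_N)$, and columns $k<p$ are left untouched. After the final round $p=N-1$, every column $k$ has reached $\columnhom_{k,\la}^N(x_k,\ldots,x_N)$, whose $j$th entry is $\hom_{\la_j-j+k}(x_k,\ldots,x_N)$; since no $\Add$ operation changed the determinant, this yields~\eqref{eq:partial_reduction_JT}.

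I expect the only genuine obstacle to be the bookkeeping of which variables survive in each column after each elementary operation, that is, stating the invariant precisely and confirming that the descending inner order keeps every helper column in the exact shape required by~\eqref{eq:hom_recur_column}. Once the invariant is in place, each individual column operation is a direct, one-line application of that single identity.
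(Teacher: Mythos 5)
Your proposal is correct and takes essentially the same route as the paper: your nested loop with $\Add(k,k-1,-x_p)$ for $k=N,\ldots,p+1$ is, after shifting the inner index by one, exactly the paper's loop $\Add(k+1,k,-x_j)$ for $k=N-1,\ldots,j$, applied with the same identity~\eqref{eq:hom_recur_column}, and your invariant matches the intermediate determinant the paper records after each outer step. Your proof is in fact slightly more detailed, since you make the induction and the role of the descending inner order explicit where the paper says ``it is easy to see''.
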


\begin{proof}
We start with \eqref{eq:Jacobi_Trudi_formula_extended}: 
\[
\schur_\la(x)
=
\det
\bigl[\hom_{\la_j - j + k} (x)\bigr]_{j,k=1}^N
=
\det \Bigl[\columnhom_{1, \la}^N(x) \ \columnhom_{2, \la}^N(x) \ \dots \ \columnhom_{N - 1, \la}^N(x) \ \columnhom_{N, \la}^N(x) \Bigr].
\]
Then, we apply elementary column operations.
The reasoning has an iterative form
and can be represented as two nested cycles.

\medskip\noindent
For $j$ from $1$ up to $N-1$:\\
\mbox{}\hspace*{4em} For $k$ from $N-1$ down to $j$:\\
\mbox{}\hspace*{8em}
$\Add(k+1,k,-x_j)$
and apply \eqref{eq:hom_recur_column}
with $p=j$ and $q=k$.

\medskip\noindent
It is easy to see that after step $j$ in the exterior cycle, we obtain
\[
\det \Bigl[
\columnhom_{1,\la}^N(x_1,\ldots,x_N)\ 
\ldots\
\columnhom_{j,\la}^N(x_{j},\ldots,x_N)\ 
\columnhom_{j + 1,\la}^N(x_{j+1},\ldots,x_N)\ 
\ldots\
\columnhom_{N,\la}^N(x_{j+1},\ldots,x_N)
\Bigr].
\]
So, after all steps we obtain
\[
\det \Bigl[
\columnhom_{1,\la}^N(x_1,\ldots,x_N)\ 
\columnhom_{2,\la}^N(x_2,\ldots,x_N)\
\ldots\
\columnhom_{N,\la}^N(x_N)
\Bigr],
\]
i.e., the right-hand side of~\eqref{eq:partial_reduction_JT}.
\end{proof}

\medskip
\section{Elimination of variables in certain polynomial determinants}
\label{sec:elimination_determinants_hom}

In this section, we explain an auxiliar idea which will be used in Section~\ref{sec:proof_Jacobi_Trudi_and_elementary_operations}.

Formula~\eqref{eq:hom_LitaDaSilva} implies the following vectorial identity
involving notation from Definition~\ref{def:column_h}:
\begin{equation}
\label{eq:hom_LitaDaSilve_vectorial}
\columnhom_{q+1, \la}^N(t_1,\ldots,t_m,u)
+
(v-u)
\columnhom_{q,\la}^N(t_1,\ldots,t_m,u,v)
=\columnhom_{q+1, \la}^N(t_1,\ldots,t_m,v).
\end{equation}

\begin{ex}
Consider the following determinant:
\begin{align*}
D
&=
\det
\begin{bmatrix}
\hom_{\la_1}(u_1,u_2, v_1,v_2) &
\hom_{\la_1+1}(u_2, v_1,v_2) &
\hom_{\la_1+2}(v_1,v_2) &
\hom_{\la_1+3}(v_2) 
\\
\hom_{\la_2-1}(u_1,u_2, v_1,v_2) &
\hom_{\la_2}(u_2, v_1,v_2) &
\hom_{\la_2+1}(v_1,v_2)
& \hom_{\la_2+2}(v_2)
\\
\hom_{\la_3-2}(u_1,u_2, v_1,v_2) &
\hom_{\la_3-1}(u_2, v_1,v_2) &
\hom_{\la_3}(v_1,v_2) &
\hom_{\la_3+1}(v_2) 
\\
\hom_{\la_4-3}(u_1,u_2, v_1,v_2) &
\hom_{\la_4-2}(u_2, v_1,v_2) &
\hom_{\la_4-1}(v_1,v_2) &
\hom_{\la_4}(v_2) 
\end{bmatrix}
\\[1ex]
&=
\det \Bigl[
\columnhom_{1,\la}^4(u_1,u_2, v_1,v_2)\ 
\columnhom_{2,\la}^4(u_2, v_1,v_2)\ 
\columnhom_{3,\la}^4(v_1,v_2)\ 
\columnhom_{4,\la}^4(v_2)
\Bigr].
\end{align*}
Apply elementary column operations $\Mul(1,u_1-v_1)$ and 
$\Add(1, 2, 1)$. Next, transform the first column as follows:
\[
(u_1-v_1)
\columnhom_{1,\la}^4(u_1,u_2, v_1,v_2)
+
\columnhom_{2,\la}^4(u_2, v_1,v_2)
= \columnhom_{2,\la}^4 (u_1,u_2,v_2).
\]
So,
\[
D =\frac{1}{u_1 - v_1} \det \Bigl[
\columnhom_{2,\la}^4(u_1,u_2, v_2)\ 
\columnhom_{2,\la}^4(u_2, v_1,v_2)\ 
\columnhom_{3,\la}^4(v_1,v_2)\ 
\columnhom_{4,\la}^4(v_2)
\Bigr].
\]
Applying operations $\Mul(2,u_2-v_1)$ and $\Add(2, 3, 1)$, we get 
\[
D = \frac{1}{(u_1 - v_1)(u_2 - v_1)}\det \Bigl[
\columnhom_{2,\la}^4(u_1, u_2, v_2)\ 
\columnhom_{3,\la}^4(u_2, v_2)\ 
\columnhom_{3,\la}^4(v_1, v_2)\ 
\columnhom_{4,\la}^4(v_2)
\Bigr].
\]
Applying operations $\Mul(1,u_1-v_2)$ and $\Add(1, 2, 1)$, we get
\[
D = \frac{\det \Bigl[
\columnhom_{3,\la}^4(u_1, u_2)\ 
\columnhom_{3,\la}^4(u_2, v_2)\ 
\columnhom_{3,\la}^4(v_1, v_2)\ 
\columnhom_{4,\la}^4(v_2)
\Bigr]
}{(u_1 - v_1)(u_2 - v_1)(u_1 - v_2)}.
\]
Finally, applying operations $\Mul(2,u_2-v_2)$ and $\Add(2, 4, 1)$, we get
\[
D =
\frac{\det \Bigl[
\columnhom_{3,\la}^4(u_1, u_2)\ 
\columnhom_{4,\la}^4(u_2)\ 
\columnhom_{3,\la}^4(v_1, v_2)\ 
\columnhom_{4,\la}^4(v_2)
\Bigr]}{(u_1 - v_1)(u_2 - v_1)(u_1 - v_2)(u_2 - v_2)}.
\]
The determinant in the numerator contains only $u_1, u_2$ in the first two columns and $v_1, v_2$ in the last two columns.
\hfill$\qed$
\end{ex}

\begin{lem}
\label{lem:reduction}
Let $N, p, q \in \bN$ such that $p + q \leq N$, $A$ be an $N \times t$ matrix, where $t = N - p - q$, $u = (u_1, \ldots, u_p)$, $v = (v_1,\ldots, v_q)$. Suppose that $u_j \neq v_k$ for all $j$ in $\{1,\ldots, p \}$ and $k$ in $\{1, \ldots, q\}$. Then
    \begin{align*}
    &
    \det 
    \begin{bmatrix}
        A & \columnhom_{t + 1, \la}^N(u_1, \ldots, u_p, v) & \ldots & \columnhom_{t + p, \la}^N(u_p, v) & \columnhom_{t + p + 1,\la}^N(v_1,\ldots,v_q) & \ldots & \columnhom_{t + p + q,\la}^N(v_q)
    \end{bmatrix}\\
    &= \frac{\det 
    \begin{bmatrix}
        A & \columnhom_{t + q + 1, \la}^N(u_1,\ldots,u_p) & \ldots & \columnhom_{t + p + q, \la}^N(u_p) & \columnhom_{t + p + 1,\la}^N(v_1,\ldots,v_q) & \ldots & \columnhom_{t + p + q,\la}^N(v_q)
    \end{bmatrix}}{\prod_{j,k=1}^{p,q}(u_j - v_k)}
    \\
    &= \frac{\det 
    \begin{bmatrix}
        A & \columnhom_{t + p + q, \la}^N(u_p) & \ldots & \columnhom_{t + q + 1, \la}^N(u_1,\ldots,u_p) & \columnhom_{t + p + 1,\la}^N(v_1,\ldots,v_q) & \ldots & \columnhom_{t + p + q,\la}^N(v_q)
    \end{bmatrix}}{(-1)^{\frac{p(p-1)}{2}}\prod_{j,k=1}^{p,q}(u_j - v_k)}
    .   
    \end{align*}
\end{lem}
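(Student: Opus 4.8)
The plan is to generalize the computation carried out in the example immediately preceding the lemma to arbitrary $p$ and $q$, performing an explicit sequence of elementary column operations organized as two nested loops and repeatedly invoking the vectorial identity~\eqref{eq:hom_LitaDaSilve_vectorial}. Throughout the process the block $A$ and the $q$ ``pure $v$'' columns in positions $t+p+1,\ldots,t+p+q$ stay fixed; only the $p$ ``mixed'' columns in positions $t+1,\ldots,t+p$ get modified. The goal is to strip the variables $v_1,\ldots,v_q$ from these mixed columns one variable at a time, each removal raising a column's homogeneity index by one and contributing one factor $u_i-v_k$ to the denominator.

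Concretely, I would run the outer loop over $k$ from $1$ to $q$ and, inside it, the inner loop over $i$ from $1$ to $p$; for each pair $(i,k)$ I apply $\Mul(t+i,\,u_i-v_k)$ followed by $\Add(t+i,\,t+i+1,\,1)$ when $i<p$ and by $\Add(t+i,\,t+p+k,\,1)$ when $i=p$, and then simplify the $(t+i)$th column with~\eqref{eq:hom_LitaDaSilve_vectorial}. The crux is the loop invariant: at the start of the $k$th pass the $(t+i)$th column equals $\columnhom_{t+i+k-1,\la}^N(u_i,\ldots,u_p,v_k,\ldots,v_q)$ for every $i$. Granting this, each inner step matches~\eqref{eq:hom_LitaDaSilve_vectorial} exactly, with the identity's $u$ taken to be $v_k$, its $v$ taken to be $u_i$, and the remaining variables playing the role of $t_1,\ldots,t_m$; hence the $(t+i)$th column becomes $\columnhom_{t+i+k,\la}^N(u_i,\ldots,u_p,v_{k+1},\ldots,v_q)$, which is precisely its prescribed value at the start of pass $k+1$. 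Since the inner loop runs left to right and each operation alters only the column it is applied to, the helper column $t+i+1$ (for $i<p$) is still in its start-of-pass state when it is read, and the helper for $i=p$ is the untouched $v$-column $\columnhom_{t+p+k,\la}^N(v_k,\ldots,v_q)$; this is what makes the invariant propagate.

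After the outer loop terminates, the $(t+i)$th column equals $\columnhom_{t+q+i,\la}^N(u_i,\ldots,u_p)$, which is exactly the $i$th mixed column of the second determinant. Along the way we performed $\Mul(t+i,u_i-v_k)$ once for each of the $pq$ pairs $(i,k)$, so the determinant was divided by $\prod_{j,k=1}^{p,q}(u_j-v_k)$; this yields the first equality. For the second equality I would simply reverse the order of the $p$ mixed columns. This reversal is a permutation with $\binom{p}{2}$ inversions, so it multiplies the determinant by $(-1)^{p(p-1)/2}$, which accounts for the extra sign appearing in the denominator of the third expression.

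The main obstacle is purely the bookkeeping: one must state and verify the loop invariant carefully, and check at each step that the column chosen as helper is indeed in the state dictated by the invariant and that its index and variable list match the hypotheses of~\eqref{eq:hom_LitaDaSilve_vectorial}. The assumption $u_j\ne v_k$ enters precisely here, guaranteeing that every factor $u_i-v_k$ by which we divide is nonzero and hence that all the operations $\Mul(t+i,u_i-v_k)$ are legitimate.
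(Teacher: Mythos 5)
Your proposal is correct and takes essentially the same route as the paper's proof: the identical double loop (outer pass over $v_k$, inner sweep over the mixed columns) applying $\Mul(t+i,\,u_i-v_k)$ and then $\Add$ into the next mixed column, or into the pure column $t+p+k$ when $i=p$, simplified via~\eqref{eq:hom_LitaDaSilve_vectorial}, followed by the same final reversal of the $p$ mixed columns contributing the sign $(-1)^{p(p-1)/2}$. Your explicit loop invariant is just a sharper formulation of the intermediate determinants the paper displays after each step, so the two arguments coincide.
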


\begin{proof} We denote by $D$ the determinant on the left-hand side. The idea of the proof is to apply some elementary column operations and formula~\eqref{eq:hom_LitaDaSilve_vectorial}. The proceeding has an iterative form. We will explain the first steps, the general step, and the last step.

Step 1. For $j$ in $\{1, \ldots, p - 1\}$,
    apply operations $\Mul(t + j, u_j - v_1)$ and $\Add(t + j, t + j + 1, 1)$. Next, apply operations $\Mul(t + p, u_p - v_1)$ and $\Add(t + p, t +p + 1, 1)$. Then, we get 
    \[
    D =
    \frac{\det 
    \begin{bmatrix}
        A & \columnhom_{t + 2, \la}^N(u, v_{2:q}) & \ldots & \columnhom_{t + p+1, \la}^N(u_p, v_{2:q}) & \columnhom_{t + p + 1,\la}^N(v) & \ldots & \columnhom_{t + p + q,\la}^N(v_q)
    \end{bmatrix}}{\prod_{j=1}^p(u_j - v_1)}.
    \]
    Step 2. For $j$ in $\{1, \ldots, p - 1\}$
    apply operations $\Mul(t + j, u_j - v_2)$ and $\Add(t + j, t + j + 1, 1)$. Next, apply operations $\Mul(t + p, u_p - v_2)$ and $\Add(t + p, t +p + 2, 1)$. Then, we get 
    
    \[
    D 
    = 
    \frac{
    \det 
    \begin{bmatrix}
        A & \columnhom_{t + 3, \la}^N(u, v_{3:q}) & \ldots & \columnhom_{t + p + 2, \la}^N(u_p, v_{3:q}) & \columnhom_{t + p + 1,\la}^N(v) & \ldots & \columnhom_{t + p + q,\la}^N(v_q)
    \end{bmatrix}}{\prod_{j=1}^p\prod_{k=1}^2(u_j - v_k)}.
    \]
    Step $r$. For $j$ in $\{1, \ldots, p - 1\}$
    apply operations $\Mul(t + j, u_j - v_r)$ and $\Add(t + j, t + j + 1, 1)$. Next, apply operations $\Mul(t + p, u_p - v_r)$ and $\Add(t + p, t +p + r, 1)$. Then, we get 
    \[
    D
    =
    \frac{\det 
    \begin{bmatrix}
        A & \columnhom_{t + r + 1, \la}^N(u, v_{r + 1:q}) & \ldots & \columnhom_{t + p + r, \la}^N(u_p, v_{r + 1:q}) & \columnhom_{t + p + 1,\la}^N(v) & \ldots & \columnhom_{t + p + q,\la}^N(v_q)
    \end{bmatrix}}{
    \prod_{j=1}^p\prod_{k=1}^r(u_j - v_k)}.
    \]
    Step $q$ (last step). For $j$ in $\{1, \ldots, p - 1\}$
    apply operations $\Mul(t + j, u_j - v_q)$ and $\Add(t + j, t + j + 1, 1)$. Next, apply operations $\Mul(t + j, u_p - v_q)$ and $\Add(t + p, t +p + q, 1)$. Then, we get
    \[
    D = \frac{\det 
    \begin{bmatrix}
        A & \columnhom_{t + q + 1, \la}^N(u) & \ldots & \columnhom_{t + p + q, \la}^N(u_p) & \columnhom_{t + p + 1,\la}^N(v) & \ldots & \columnhom_{t + p + q,\la}^N(v_q)
    \end{bmatrix}}{\prod_{j=1}^p\prod_{k=1}^q(u_j - v_k)}.     \] 
Finally, after all these steps, we reverse the order of the columns with indices $t + 1, \ldots, t + p$.
\end{proof}

\medskip
\section{Second proof: using Jacobi--Trudi formula and elementary operations}
\label{sec:proof_Jacobi_Trudi_and_elementary_operations}

This section is essentially based on Sections~\ref{sec:partial_reduction_of_JT_determinant} and~\ref{sec:elimination_determinants_hom}.

\begin{proof}[Second proof of Theorem~\ref{thm:bialternant_formula_Schur_rep}.]
We denote $y^{[\ka]}$ by $x$
and $\schur_\la(x)$ by $D$.
By Proposition~\ref{prop:partial_reduction_JT},
\[
D
=
\det \Bigl[
\columnhom_{1,\la}^N(x_1,\ldots,x_N)\ 
\columnhom_{2,\la}^N(x_2,\ldots,x_N)\
\ldots\
\columnhom_{N,\la}^N(x_N)
\Bigr].
\]
Now, we iteratively define matrices $A_0, A_1, \ldots, A_{n}$. Recall that $\si(\ka)$ denotes the list of the partial sums of $\ka$. For each $q$ in $\{1, \ldots, n\}$, $A_q$ is a matrix of size $N \times \si(\ka)_{q}$. $A_0$ is just the empty matrix,
\begin{align*}
A_1 & \eqdef \begin{bmatrix}
A_0 & \columnhom_{N, \la}^N(x_{\ka_1}) & \ldots & \columnhom_{N - \ka_1 + 1, \la}^N(x_1,\ldots, x_{\ka_1})
\end{bmatrix},
\\
A_2 & \eqdef 
\begin{bmatrix}
A_1 & \columnhom_{N, \la}^N(x_{\ka_1 + \ka_2})
 & \ldots & \columnhom_{N - \ka_2 + 1, \la}^N(x_{\ka_1 + 1},\ldots, x_{\ka_1 + \ka_2})
\end{bmatrix},
\\
A_3 & \eqdef 
\begin{bmatrix}
A_2 & \columnhom_{N, \la}^N(x_{\ka_1 + \ka_2 + \ka_3})
 & \ldots & \columnhom_{N - \ka_3 + 1, \la}^N(x_{\ka_1 + \ka_2 +  1},\ldots, x_{\ka_1 + \ka_2 + \ka_3})
\end{bmatrix}, \ \ldots, 
\\
A_{q} & \eqdef
\begin{bmatrix}
A_{q-1}
&  \columnhom_{N, \la}^N(x_{\si(\ka)_{q}}) & \ldots &\columnhom_{N - \ka_{q} + 1, \la}^N(x_{\si(\ka)_{q-1} + 1},\ldots, x_{\si(\ka)_{q}})
\end{bmatrix}, \ \ldots.
\end{align*}
Moreover, we need the following products of differences:
\[
P_q 
= 
\prod_{j = \ka_1 + \ldots +  \ka_{q-1} + 1}^{\ka_1 + \ldots + \ka_q} 
\; 
\prod_{k = \ka_1 + \ldots + \ka_q + 1}^N (x_j - x_k) 
=
\prod_{j = \si(\ka)_{q-1} + 1}^{\si(\ka)_q } 
\; 
\prod_{k = \si(\ka)_{q} + 1}^N (x_j - x_k).
\]
After that, we will iteratively apply Lemma~\ref{lem:reduction}.

Step 1.
Apply Lemma~\ref{lem:reduction}
with $t=0$ (so, the corresponding block $A_0$ is an empty matrix),
$p=\ka_1$, $ q = N - \ka_1$,
$u=(x_{1},\ldots,x_{\ka_1})$,
$v=(x_{\ka_1 + 1},\ldots,x_{N})$.
We obtain
\begin{align*}
D
&= 
\frac{
\begin{vmatrix}
     \columnhom_{N, \la}^N(x_{\ka_1}) & \ldots &\columnhom_{N - \ka_1 + 1, \la}^N(x_1,\ldots, x_{\ka_1}) & \columnhom_{\ka_1 + 1, \la}^N(x_{\ka_1 + 1}, \ldots, x_{N}) & \ldots & \columnhom_{N, \la}^N(x_{N}) 
\end{vmatrix}}%
{(-1)^{\frac{\ka_1(\ka_1 - 1)}{2}}\,P_1}\\
&= 
\frac{
\begin{vmatrix}
     A_1 & \columnhom_{\ka_1 + 1, \la}^N(x_{\ka_1 + 1}, \ldots, x_{N}) & \ldots & \columnhom_{N, \la}^N(x_{N}) 
\end{vmatrix}}%
{(-1)^{\frac{\ka_1(\ka_1 - 1)}{2}}\,P_1}.
\end{align*}
Step 2.
Apply Lemma~\ref{lem:reduction}
with $t=\ka_1$,
$p=\ka_2$, $q=N - \ka_1 - \ka_2$,
$u=(x_{\ka_1 + 1},\ldots,x_{\ka_1 + \ka_2})$,
$v=(x_{\ka_1 + \ka_2 + 1},\ldots,x_{N})$.
We get
\begin{align*}
D
&=
\frac{
\begin{vmatrix}
    A_1 & \columnhom_{N, \la}^N(x_{\ka_1 + \ka_2}) & \ldots & \columnhom_{N - \ka_2 + 1, \la}^N(
    x_{\ka_1 + 1},\ldots, x_{\ka_1 + \ka_2}
    ) & \columnhom_{\ka_1 + \ka_2 + 1, \la}^N(
    v
    ) & \ldots & \columnhom_{N, \la}^N(x_{N}) 
\end{vmatrix}}%
{(-1)^{\frac{\ka_1(\ka_1 - 1)}{2} + \frac{\ka_2(\ka_2 - 1)}{2}}\,P_1 P_2}\\
&=
\frac{
\begin{vmatrix}
    A_2 & \columnhom_{\ka_1 + \ka_2 + 1, \la}^N(
    v
    ) & \ldots & \columnhom_{N, \la}^N(x_{N}) 
\end{vmatrix}}%
{(-1)^{\frac{\ka_1(\ka_1 - 1)}{2} + \frac{\ka_2(\ka_2 - 1)}{2}}\,P_1 P_2}.
\end{align*}
Step 3.
Apply Lemma~\ref{lem:reduction}
with $t=\ka_1 + \ka_2$,
$p=\ka_3$, $q=N - \ka_1 - \ka_2 - \ka_3$,
\[
u=(x_{\ka_1 + \ka_2 + 1},\ldots,x_{\ka_1 + \ka_2 + \ka_3}), \quad
v=(x_{\ka_1 + \ka_2 + \ka_3 + 1},\ldots,x_{N}).
\]
Then
\begin{align*}
D
&=
\frac{
\begin{vmatrix}
    A_2 & \columnhom_{N, \la}^N(x_{\ka_1 + \ka_2 + \ka_3}) & \ldots & \columnhom_{N - \ka_2 + 1, \la}^N(u
   ) & \columnhom_{\ka_1 + \ka_2 + 1, \la}^N(
    v
    ) & \ldots & \columnhom_{N, \la}^N(x_{N}) 
\end{vmatrix}}%
{
(-1)^{\frac{\ka_1(\ka_1 - 1)}{2} + \frac{\ka_2(\ka_2 - 1)}{2} + \frac{\ka_3(\ka_3 - 1)}{2}}\,
P_1 P_2 P_3
}\\
&=
\frac{
\begin{vmatrix}
    A_3 & \columnhom_{\ka_1 + \ka_2 + 1, \la}^N(
    v
    ) & \ldots & \columnhom_{N, \la}^N(x_{N}) 
\end{vmatrix}}%
{
(-1)^{\frac{\ka_1(\ka_1 - 1)}{2} + \frac{\ka_2(\ka_2 - 1)}{2} + \frac{\ka_3(\ka_3 - 1)}{2}}\,
P_1 P_2 P_3
}.
\end{align*}
Step $r$: Apply Lemma~\ref{lem:reduction}
with $t= \si(\ka)_{r-1} = \ka_1 + \ldots + \ka_{r-1}$,
$p=\ka_{r}$, $q=N - \si(\ka)_r$,
\[
u=(x_{\si(\ka)_{r-1} + 1},\ldots,x_{\si(\ka)_r}), \quad
v=(x_{\si(\ka)_r + 1},\ldots,x_{N}).
\] 
So,

\begin{align*}
D
&=
\frac{
\begin{vmatrix}
    A_{r-1} & \columnhom_{N, \la}^N(x_{\si(\ka)_r}) & \ldots & \columnhom_{N - \ka_r + 1, \la}^N(
    u
    ) & \columnhom_{\si(\ka)_r + 1, \la}^N(v) & \ldots & \columnhom_{N, \la}^N(x_{N}) 
\end{vmatrix}}%
{
(-1)^{\sum_{j=1}^r \frac{\ka_j(\ka_j - 1)}{2}}
\prod_{j=1}^{r} P_j 
}\\
&=
\frac{
\begin{vmatrix}
    A_{r} & \columnhom_{\si(\ka)_r + 1, \la}^N(v) & \ldots & \columnhom_{N, \la}^N(x_{N}) 
\end{vmatrix}}%
{
(-1)^{\sum_{j=1}^r \frac{\ka_j(\ka_j - 1)}{2}}
\prod_{j=1}^{r} P_j 
}.
\end{align*}
These steps are made for each $r$  from $1$ to $n$, resulting in
\[
D 
= \frac{\det A_n}{
(-1)^{\sum_{j=1}^n \frac{\ka_j(\ka_j - 1)}{2}}
\prod_{j=1}^{n} P_j }.
\]
After that, for each $j$ in  $\{1,\ldots, n\}$, we substitute $x_{\si(\ka)_{j-1} + 1},\ldots,x_{\si(\ka)_j}$ with $y_{j}$.
In the numerator, the $A_n$ transforms to $G_{\la}(y, \ka)$ in form~\eqref{eq:G_def_hom_rep}.
By~\eqref{eq:determinant_vandermonde_confluent_another_form}, in the denominator we obtain $\det G_\emptyset(y,\ka)$.
\end{proof}

\bigskip
\section{Third proof: using the classical bialternant formula}
\label{sec:proof_from_classic_bialternant}

In this section, we will apply \eqref{eq:hom_LitaDaSilva} in the following form:
\begin{equation}
\label{eq:da_silva_property_colum_h}
\begin{aligned}
&\columnhom_{q+1, \la}^N(t_1,\ldots,t_m,t_{m + 1})
-\columnhom_{q+1, \la}^N(t_1,\ldots,t_m,t_{m + 2})
\\
&\qquad\qquad
= (t_{m+1}-t_{m+2})
\columnhom_{q, \la}^N(t_1,\ldots,t_m,t_{m+1},t_{m+2}).
\end{aligned}
\end{equation}

Before the general reasoning,
we illustrate the idea with the following example.

\begin{ex} Let $n=2$, $\ka_1 = 3$, $\ka_2 = 2$,
and $x=(x_1,\ldots,x_5)$.
Then~\eqref{eq:schur_bialternant} reads as
\begin{align*}
\schur_{\la}(x)
&= \frac{\det
\begin{bmatrix}
\hom_{\la_1 + 4}(x_1) & \hom_{\la_1 + 4}(x_2) & \hom_{\la_1 + 4}(x_3) & \hom_{\la_1 + 4}(x_4) & \hom_{\la_1 + 4}(x_5)
\\
\hom_{\la_2 + 3}(x_1) & \hom_{\la_2 + 3}(x_2) & \hom_{\la_2 + 3}(x_3) & \hom_{\la_2 + 3}(x_4) & \hom_{\la_2 + 3}(x_5)
\\
\hom_{\la_3 + 2}(x_1) & \hom_{\la_3 + 2}(x_2) & \hom_{\la_3 + 2}(x_3) & \hom_{\la_3 + 2}(x_4) & \hom_{\la_3 + 2}(x_5)
\\
\hom_{\la_4 + 1}(x_1) & \hom_{\la_4 + 1}(x_2) & \hom_{\la_4 + 1}(x_3) & \hom_{\la_4 + 1}(x_4) & \hom_{\la_4 + 1}(x_5)
\\
\hom_{\la_5}(x_1) & \hom_{\la_5}(x_2) & \hom_{\la_5}(x_3) & \hom_{\la_5}(x_4) & \hom_{\la_5}(x_5)
\end{bmatrix}}{\det
\begin{bmatrix}
\hom_{4}(x_1) & \hom_{4}(x_2) & \hom_{4}(x_3) & \hom_{4}(x_4) & \hom_4(x_5)
\\
\hom_{3}(x_1) & \hom_{3}(x_2) & \hom_{3}(x_3) & \hom_{3}(x_4) & \hom_3(x_5)
\\
\hom_{2}(x_1) & \hom_{2}(x_2) & \hom_{2}(x_3) & \hom_{2}(x_4) & \hom_2(x_5)
\\
\hom_{1}(x_1) & \hom_{1}(x_2) & \hom_{1}(x_3) & \hom_{1}(x_4) & \hom_1(x_5)
\\
\hom_{0}(x_1) & \hom_{0}(x_2) & \hom_{0}(x_3) & \hom_{0}(x_4) & \hom_0(x_5)
\end{bmatrix}}.
\end{align*}
The determinant in the denominator is a particular case ($\la = \emptyset$) of the determinant in the numerator. We will simultaneously apply the same elementary operations to both of them and show the transformation of the numerator. 

\begin{itemize}
    \item Apply operation $\Add(2, 1, -1)$, apply \eqref{eq:da_silva_property_colum_h} and factorize $x_2 - x_1$.
    \item Apply operation $\Add(3, 1, -1)$, apply \eqref{eq:da_silva_property_colum_h} and factorize $x_3 - x_1$.
    \item Apply operation $\Add(3, 2, -1)$, apply \eqref{eq:da_silva_property_colum_h} and factorize $x_3 - x_2$.
\end{itemize}
We represent the numerator as the product of $(x_3 - x_1)(x_2 - x_1)(x_3 - x_2)$ by the following determinant:
\begin{align*}
\det
\begin{bmatrix}
\hom_{\la_1 + 4}(x_1) & \hom_{\la_1 + 3}(x_1,x_2) & \hom_{\la_1 + 2}(x_1, x_2, x_3) & \hom_{\la_1 + 4}(x_4) & \hom_{\la_1 + 4}(x_5)
\\
\hom_{\la_2 + 3}(x_1) & \hom_{\la_2 + 2}(x_1,x_2) & \hom_{\la_2 + 1}(x_1, x_2, x_3) & \hom_{\la_2 + 3}(x_4) & \hom_{\la_2 + 3}(x_5)
\\
\hom_{\la_3 + 2}(x_1) & \hom_{\la_3 + 1}(x_1, x_2) & \hom_{\la_3}(x_1, x_2, x_3) & \hom_{\la_3 + 2}(x_4) & \hom_{\la_3 + 2}(x_5)
\\
\hom_{\la_4 + 1}(x_1) & \hom_{\la_4}(x_1, x_2) & \hom_{\la_4 - 1}(x_1, x_2, x_3) & \hom_{\la_4 + 1}(x_4) & \hom_{\la_4 + 1}(x_5)
\\
\hom_{\la_5}(x_1) & \hom_{\la_5 - 1}(x_1, x_2) & \hom_{\la_5 - 2}(x_1, x_2, x_3) & \hom_{\la_5}(x_4) & \hom_{\la_5}(x_5)
\end{bmatrix}.
\end{align*}
The factor $(x_3 - x_1)(x_2 - x_1)(x_3 - x_2)$ also appears in the same way in the denominator, and this factor is canceled. Next, apply operation $\Add(5, 4, -1)$. Applying \eqref{eq:da_silva_property_colum_h} we  represent the numerator as the product of $x_5 - x_4$ by the following determinant:
\begin{align*}
\det
\begin{bmatrix}
\hom_{\la_1 + 4}(x_1) & \hom_{\la_1 + 3}(x_1,x_2) & \hom_{\la_1 + 2}(x_1, x_2, x_3) & \hom_{\la_1 + 4}(x_4) & \hom_{\la_1 + 3}(x_4,x_5)
\\
\hom_{\la_2 + 3}(x_1) & \hom_{\la_2 + 2}(x_1,x_2) & \hom_{\la_2 + 1}(x_1,x_2, x_3) & \hom_{\la_2 + 3}(x_4) & \hom_{\la_2 + 2}(x_4,x_5)
\\
\hom_{\la_3 + 2}(x_1) & \hom_{\la_3 + 1}(x_1, x_2) & \hom_{\la_3}(x_1, x_2, x_3) & \hom_{\la_3 + 2}(x_4) & \hom_{\la_3 + 1}(x_4, x_5)
\\
\hom_{\la_4 + 1}(x_1) & \hom_{\la_4}(x_1, x_2) & \hom_{\la_4 - 1}(x_1, x_2, x_3) & \hom_{\la_4 + 1}(x_4) & \hom_{\la_4}(x_4, x_5)
\\
\hom_{\la_5}(x_1) & \hom_{\la_5 - 1}(x_1, x_2) & \hom_{\la_5 - 2}(x_1, x_2 x_3) & \hom_{\la_5}(x_4) & \hom_{\la_5 - 1}(x_4, x_5)
\end{bmatrix}.
\end{align*}
The factor $(x_5 - x_4)$ also appears in the denominator.
Therefore, it is canceled.
We have obtained the following quotient:
\begin{align*}
\schur_\la(x) =  \frac{\det
\begin{bmatrix}
\hom_{\la_1 + 4}(x_1) & \hom_{\la_1 + 3}(x_1,x_2) & \hom_{\la_1 + 2}(x_1, x_2, x_3) & \hom_{\la_1 + 4}(x_4) & \hom_{\la_1 + 3}(x_4,x_5)
\\
\hom_{\la_2 + 3}(x_1) & \hom_{\la_2 + 2}(x_1,x_2) & \hom_{\la_2 + 1}(x_1,x_2, x_3) & \hom_{\la_2 + 3}(x_4) & \hom_{\la_2 + 2}(x_4,x_5)
\\
\hom_{\la_3 + 2}(x_1) & \hom_{\la_3 + 1}(x_1, x_2) & \hom_{\la_3}(x_1, x_2, x_3) & \hom_{\la_3 + 2}(x_4) & \hom_{\la_3 + 1}(x_4, x_5)
\\
\hom_{\la_4 + 1}(x_1) & \hom_{\la_4}(x_1, x_2) & \hom_{\la_4 - 1}(x_1, x_2, x_3) & \hom_{\la_4 + 1}(x_4) & \hom_{\la_4}(x_4, x_5)
\\
\hom_{\la_5}(x_1) & \hom_{\la_5 - 1}(x_1, x_2) & \hom_{\la_5 - 2}(x_1, x_2 x_3) & \hom_{\la_5}(x_4) & \hom_{\la_5 - 1}(x_4, x_5)
\end{bmatrix} }{\det
\begin{bmatrix}
\hom_{4}(x_1) & \hom_{3}(x_1,x_2) & \hom_{2}(x_1, x_2, x_3) & \hom_{4}(x_4) & \hom_{3}(x_4,x_5)
\\
\hom_{3}(x_1) & \hom_{2}(x_1,x_2) & \hom_{1}(x_1,x_2, x_3) & \hom_{3}(x_4) & \hom_{2}(x_4,x_5)
\\
\hom_{2}(x_1) & \hom_{1}(x_1, x_2) & \hom_{0}(x_1, x_2, x_3) & \hom_{2}(x_4) & \hom_{1}(x_4, x_5)
\\
\hom_{1}(x_1) & \hom_{0}(x_1, x_2) & 0 & \hom_{1}(x_4) & \hom_{0}(x_4, x_5)
\\
\hom_{0}(x_1) & 0 & 0 & \hom_{0}(x_4) & 0
\end{bmatrix} }.
\end{align*}
Substituting $x_1,x_2,x_3$ for $y_1$ and $x_4,x_5$ for $y_2$ in the previous formula and applying \eqref{eq:hom_one_variable_with_repetitions}, we conclude that 
$\schur_\la(y_1^{[3]}, y_2^{[2]})$
equals the following quotient:
\small{
\[
\frac{\det
\begin{bmatrix}
\binom{\la_1 + 4}{0}\hom_{\la_1 + 4}(y_1) & \binom{\la_1 + 4}{1}\hom_{\la_1 + 3}(y_1) & \binom{\la_1 + 4}{2}\hom_{\la_1 + 2}(y_1) & \binom{\la_1 + 4}{0}\hom_{\la_1 + 4}(y_2) & \binom{\la_1 + 4}{1}\hom_{\la_1 + 3}(y_2)
\\[1ex]
\binom{\la_2 + 3}{0}\hom_{\la_2 + 3}(y_1) 
&
\binom{\la_2 + 3}{1}\hom_{\la_2 + 2}(y_1) 
&
\binom{\la_2 + 3}{2}\hom_{\la_2 + 1}(y_1) & \binom{\la_2 + 3}{0}\hom_{\la_2 + 3}(y_2) & \binom{\la_2 + 3}{1}\hom_{\la_2 + 2}(y_2)
\\[1ex]
\binom{\la_3 + 2}{0}\hom_{\la_3 + 2}(y_1) & \binom{\la_3 + 2}{1}\hom_{\la_3 + 1}(y_1) & \binom{\la_3 + 2}{2}\hom_{\la_3}(y_1) & \binom{\la_3 + 2}{0}\hom_{\la_3 + 2}(y_2) & \binom{\la_3 + 2}{1}\hom_{\la_3 + 1}(y_2)
\\[1ex]
\binom{\la_4 + 1}{0}\hom_{\la_4 + 1}(y_1) & \binom{\la_4 + 1}{1}\hom_{\la_4}(y_1) & \binom{\la_4 + 1}{2}\hom_{\la_4 - 1}(y_1) & \binom{\la_4 + 1}{0}\hom_{\la_4 + 1}(y_2) & \binom{\la_4 + 1}{1}\hom_{\la_4}(y_2)
\\[1ex]
\binom{\la_5}{0}\hom_{\la_5}(y_1) & \binom{\la_5}{1}\hom_{\la_5 - 1}(y_1) & \binom{\la_5}{2}\hom_{\la_5 - 2}(y_1) & \binom{\la_5}{0}\hom_{\la_5}(y_2) & \binom{\la_5}{1}\hom_{\la_5 - 1}(y_2)
\end{bmatrix} }{\det
\begin{bmatrix}
\binom{4}{0}\hom_{4}(y_1) & \binom{4}{1}\hom_{3}(y_1) & \binom{4}{2}\hom_{2}(y_1) & \binom{4}{0}\hom_{4}(y_2) & \binom{4}{1}\hom_{3}(y_2)
\\[1ex]
\binom{3}{0}\hom_{3}(y_1) & \binom{3}{1}\hom_{2}(y_1) & \binom{3}{2}\hom_{1}(y_1) & \binom{3}{0}\hom_{3}(y_2) & \binom{3}{1}\hom_{2}(y_2)
\\[1ex]
\binom{2}{0}\hom_{2}(y_1) & \binom{2}{1}\hom_{1}(y_1) & \binom{2}{2}\hom_{0}(y_1) & \binom{2}{0}\hom_{2}(y_2) & \binom{2}{1}\hom_{1}(y_2)
\\[1ex]
\binom{1}{0}\hom_{1}(y_1) & \binom{1}{1}\hom_{0}(y_1) & 0 & \binom{1}{0}\hom_{1}(y_2) & \binom{1}{1}\hom_{0}(y_2)
\\[1ex]
\binom{0}{0}\hom_{0}(y_1) & 0 & 0 & \binom{0}{0}\hom_{0}(y_2) & 0
\end{bmatrix} }.
\]}

\noindent
In other words,
\[
\schur_\la\bigl(y_1^{[3]}, y_2^{[2]}\bigl)
=
\frac{G_\la(y, \ka)}{G_{\emptyset}(y, \ka)}.
\]
\hfill\qed
\end{ex}

\begin{proof}[Third proof of Theorem~\ref{thm:bialternant_formula_Schur_rep}.]
Let $x=(x_1, \ldots, x_N)$.
Applying notation~\eqref{eq:column_h} to~\eqref{eq:schur_bialternant}, we get
\begin{equation}
\label{eq:schur_bialternant_via_column_h}
\schur_\la(x)
=
\frac{\det
\begin{bmatrix}
\columnhom_{N,\la}^N(x_1) & \columnhom_{N,\la}^N(x_2) &
\ldots &
\columnhom_{N,\la}^N(x_N)
\end{bmatrix}}%
{\det
\begin{bmatrix}
\columnhom_{N,\emptyset}^N(x_1) & \columnhom_{N,\emptyset}^N(x_2) &
\ldots &
\columnhom_{N,\emptyset}^N(x_N)
\end{bmatrix}
}.
\end{equation}
Let $q\in\{1,\ldots,n\}$.
For brevity, put $\eta=\si(\ka)_{q-1}$.
We will apply some elementary operations to the columns with indices from
$\si(\ka)_{q-1}+1$ to $\si(\ka)_q$,
i.e., from $\eta+1$ to $\eta+\ka_q$.
Moreover, we will show only these columns.
With this emphasis,~\eqref{eq:schur_bialternant_via_column_h} can be rewritten as
\[
\schur_\la(x)
= \frac{\det
\begin{bmatrix}
\ \ldots &
\columnhom_{N,\la}^N(x_{\eta + 1}) &
\columnhom_{N,\la}^N(x_{\eta + 2}) &
\ldots &
\columnhom_{N,\la}^N(x_{\eta + \ka_q}) &
\ldots\ \mbox{}
\end{bmatrix}}%
{\det
\begin{bmatrix}
\ \ldots &
\columnhom_{N,\emptyset}^N(x_{\eta + 1}) &
\columnhom_{N,\emptyset}^N(x_{\eta + 2}) &
\ldots &
\columnhom_{N,\emptyset}^{N}(x_{\eta + \ka_q}) &
\ldots\ \mbox{}
\end{bmatrix}}.
\]
Step $1$.
For each $k$ in $\{2,\ldots,\ka_q\}$,
we apply operation
$\Add(\eta+k,\eta+1,-1)$,
i.e.,
we subtract column $\eta+1$
from column $\eta+k$
and transform column $\eta+k$
with help of~\eqref{eq:da_silva_property_colum_h}:
\[
\columnhom_{N,\la}^N(x_{\eta+k})
-\columnhom_{N,\la}^N(x_{\eta+1})
=(x_{\eta+k}-x_{\eta+1})
\columnhom_{N-1,\la}^N(x_{\eta+1},x_{\eta+k}).
\]
Then, we factorize the following product both in the numerator and denominator:
\[
\prod_{k=2}^{\ka_q}
\left(x_{\eta + k} - x_{\eta + 1}\right).
\]
This product is canceled,
and we obtain
\[
\schur_\la(x)
=
\frac{ \det
\begin{bmatrix} 
\ \ldots &
\columnhom_{N,\la}^N(x_{\eta + 1}) &
\columnhom_{N-1,\la}^N(x_{\eta + 1}, x_{\eta + 2}) &
\ldots &
\columnhom_{N-1,\la}^N(x_{\eta + 1}, x_{\eta + \ka_{q}}) &
\ldots\ \mbox{}
\end{bmatrix}}%
{\det
\begin{bmatrix}
\ \ldots &
\columnhom_{N,\emptyset}^N(x_{\eta + 1}) &
\columnhom_{N-1,\emptyset}^N(x_{\eta + 1}, x_{\eta + 2}) &
\ldots &
\columnhom_{N-1,\emptyset}^N(x_{\eta + 1}, x_{\eta + \ka_{q}}) &
\ldots\ \mbox{}
\end{bmatrix}}.
\]
Step $t$.
For each $k$ in $\{t + 1,\ldots,\ka_q\}$, we apply operation $\Add(\eta + k, \eta + t, -1)$. Using \eqref{eq:da_silva_property_colum_h}, we factorize the following product both in the numerator and denominator:
\[
\prod_{k=t + 1}^{\ka_q}\left(x_{\eta + k} - x_{\eta + t}\right).
\]
This product is canceled. Using notation $u = (x_{\eta + 1}, \ldots, x_{\eta + t})$, we obtain the following quotient of determinants:
\[
\frac{ \det
\begin{bmatrix}
\ldots &
\columnhom_{N,\la}^{N}(x_{\eta + 1}) & \ldots & \columnhom_{N - t + 1,\la}^{N}(u) & \columnhom_{N - t,\la}^{N}(u, x_{\eta + t +1}) & \ldots & \columnhom_{N- t, \la}^{N}(u, x_{\eta + \ka_{q}}) & \ldots
\end{bmatrix}}{\det 
\begin{bmatrix}
\ldots & \columnhom_{N,\emptyset}^{N}(x_{\eta + 1}) & \ldots & \columnhom_{N - t + 1,\emptyset}^{N}(u) & 
\columnhom_{N- t ,\emptyset}^{N}(u,x_{\eta + t + 1}) & \ldots & \columnhom_{N-t,\emptyset}^{N}(u,x_{\eta + \ka_{q}}) & \ldots
\end{bmatrix}}.
\]

These steps are made for each $t$ from $1$ to $\ka_q - 1$,
in the ascending order.
After that, we substitute $x_{\eta + 1},\ldots,x_{\eta + \ka_q}$  with $y_q$. Thereby, we get
\[
\frac{ \det
\begin{bmatrix}
\ldots &
\columnhom_{N,\la}^{N}(y_q) & \columnhom_{N - 1,\la}^{N}(y_q^{[2]}) & \ldots  & \columnhom_{N- \ka_q + 1, \la}^{N}(y_q^{[\ka_q]}) & \ldots
\end{bmatrix}}{ \det
\begin{bmatrix}
\ldots &
\columnhom_{N,\emptyset}^{N}(y_q) & \columnhom_{N - 1,\emptyset}^{N}(y_q^{[2]}) & \ldots  & \columnhom_{N- \ka_q + 1, \emptyset}^{N}(y_q^{[\ka_q]}) & \ldots
\end{bmatrix}}.
\]

We perform all the operations described above for each $q$ in $\{1, \ldots, n\}$, and we get the right-hand side of \eqref{eq:bialternant_formula_Schur_rep} with matrices $G_\la(y,\ka)$ and $G_{\emptyset}(y,\ka)$ in form~\eqref{eq:G_def_hom_rep}.
\end{proof}

\bigskip

\section{\texorpdfstring{Computation of the matrix $\boldsymbol{G_{\la}(y,\ka)}$}{Computation of the matrix G}}
\label{sec:computation_of_G}

The programs mentioned in this section are freely available at~\cite{GonzalezSerranoMaximenko2023schurgithub}. 
In this section,
we use the zero-based numbering.
The next simple code, written in the sintaxis of Python/SageMath~\cite{Sagemath},
constructs matrix $G_\la(y,\ka)$ from the initial data $\la$, $y$, $\ka$
by formula~\eqref{eq:G0_def}.
We suppose that the components of $y$ are of the same type $T$,
where $T$ is a field.
For example, $T$ can be $\bQ$,
or an approximate version of $\bC$,
or $\bQ(y_0,\ldots,y_{n-1})$,
i.e., the fraction field of multivariate polynomial ring in $y_0,\ldots,y_{n-1}$ with coefficients in $\bQ$.
At the beginning, we extend $\la$ by zeros to the length $N$.
We fill the resulting matrix by columns.
To avoid the conditional expression if/else, for each column we compute the maximum of the indices $j$ such that
$N+\la_j-j-r-1 \ge 0$.
We recall that \texttt{range(n)} in Python constructs the list $0,1,\ldots,n-1$.
We write \texttt{la} instead of \texttt{lambda} because \texttt{lambda} is a keyword in Python.

\begin{alg}[trivial algorithm to construct $G_\la(y,\ka)$]
\mbox{}
\label{alg:trivial_G}
\begin{lstlisting}
def gen_confluent_vandermonde_matrix(la, y, kappa):
    n = len(kappa)
    N = sum(kappa)
    la_ext = la + [0] * (N - n)  
    T = parent(y[0])
    G = matrix(T, N, N)
    p = 0
    for q in range(n):
        for r in range(kappa[q]):
            u = [N + la_ext[j] - j - r - 1 for j in range(N)]
            jlist = [j for j in range(N) if u[j] >= 0]
            jmax = max(jlist, default = -1)
            for j in range(jmax + 1):
                G[j, p] = binomial(u[j], r) * (y[q] ** u[j])
            p += 1
    return G
\end{lstlisting}
\end{alg}

\begin{rem}[time complexity for finite-precision fields]
Suppose that $T$ is a
finite-precision version of $\bR$ or $\bC$.
Then the powers $a^b$ can be computed as $\exp(b\log a)$
and the computation of the binomial coefficients can also be performed in $T$.
After these modifications,
Algorithm~\ref{alg:trivial_G}
has time complexity $O(N^2)$.
Notice that the determinant of $G_\la(y,\ka)$ can be computed by some of classical algorithms (a variant of PLU or QR)
involving $O(N^3)$ arithmetic operations in $T$,
and the determinant of $G_\emptyset(y,\ka)$ can be computed by~\eqref{eq:determinant_vandermonde_confluent} using $O(N^2)$ arithmetic operations.
So, this method to compute $\schur_\la(y^{[\ka]})$ has time complexity $O(N^3)$.
\end{rem}

\medskip\noindent
In the rest of this section,
we suppose that the powers $a^b$ are not computed as $\exp(b\log a)$.
We explain an algorithm to construct $G_\la(y,\ka)$
which is slightly more efficient than Algorithm~\ref{alg:trivial_G}.
It can be useful if $T$ is an exact field, such as $\bQ$ or $\bQ(y_0,\ldots,y_{n-1})$,
and $\la$ has large components.

Let $K$ be the maximum value of the list $\ka$ and
\[
L\eqdef \lfloor \log_2(\la_0+N)\rfloor,
\qquad\text{i.e.},\qquad
L =
\max
\bigl\{
k\in\bNz\colon\
2^k \le \la_0+N
\bigr\}.
\]
We use the following ideas.
\begin{enumerate}

\item Before filling \texttt{G},
we compute the binomial coefficients
$\binom{N+\la_j-j-1}{r}$
for $0\le r<K$.

\item For each $q$,
we compute the ``binary powers'' $y_q,y_q^2,y_q^4,y_q^8,\ldots,y_q^{2^L}$
by squaring.

\item To calculate $y_q^{N+\la_j-j-r-1}$,
we apply the binary exponentiation algorithm (also known as exponentiating by squaring)
using the precomputed array
$[y_q,y_q^2,y_q^4,y_q^8,\ldots,y_q^{2^L}]$.

\end{enumerate}

\noindent
Here are some auxiliary functions.

\begin{lstlisting}
def binomial_coefs(t, m):
    # compute [C[0], ..., C[m - 1]] where C[k] = binomial(t, k)
    C = vector(ZZ, m)
    C[0] = 1  
    for k in range(1, m):
        C[k] = C[k - 1] * (t - k + 1) / k
    return C
\end{lstlisting}

\begin{lstlisting}
def integer_log2(x):
    # compute floor(log2(x)), i.e.,
    # the maximum of k such that 2 ** k <= x
    y = 1
    k = 0
    while y <= x:
        y = 2 * y
        k += 1
    return k - 1
\end{lstlisting}

\begin{lstlisting}
def binary_powers(a, L):
    # compute [a, a ** 2, a ** 4, a ** 8, ..., a ** (2 ** L)]
    F = parent(a)
    b = vector(F, L + 1)
    p = 1
    c = a
    for j in range(L + 1):
        b[j] = c
        c = c * c
    return b
\end{lstlisting}

\begin{lstlisting}
def binary_expon(B, p):
    # compute a ** p using
    # B = [a, a ** 2, a ** 4, a ** 8, ..., a ** (2 ** L)];
    # we suppose that p < 2 ** (L + 1)
    F = parent(B[0])
    c = F.one()
    q = p
    j = 0
    while q > 0:
        if mod(q, 2) != 0:
            c = c * B[j]
        q = q // 2
        j = j + 1
    return c
\end{lstlisting}

\begin{alg}[an efficient algorithm to construct $G_\la(y,\ka)$]
\label{alg:G_efficient}
\mbox{}
\begin{lstlisting}
def gen_confluent_vandermonde_matrix_eff(la, y, kappa):
    n = len(kappa)
    N = sum(kappa)
    K = max(kappa)
    la_ext = la + [0] * (N - n) 
    L = integer_log2(la_ext[0] + N - 1)
    T = parent(y[0])
    C = matrix(ZZ, N, K)
    for j in range(N):
        C[j, :] = binomial_coefs(N + la_ext[j] - j - 1, K)
    G = matrix(T, N, N)
    p = 0
    for q in range(n):
        B = binary_powers(y[q], L)
        for r in range(kappa[q]):
            u = [N + la_ext[j] - j - r - 1 for j in range(N)]
            jlist = [j for j in range(N) if u[j] >= 0]
            jmax = max(jlist, default = -1)
            for j in range(jmax + 1):
                G[j, p] = C[j, r] * binary_expon(B, u[j])
            p += 1            
    return G
\end{lstlisting}
\end{alg}

\begin{rem}[an upper bound for the number of arithmetic operations over $T$ in Algorithm~\ref{alg:G_efficient}]
\label{rem:G_efficient_computational_complexity}
Let $A(\la,\ka)$ be the number of arithmetic operations over $T$ or $\bZ$
performed in Algorithm~\ref{alg:G_efficient}.
We are going to estimate $A(\la,\ka)$ from above.
Here are the principal computational costs:
\begin{itemize}

\item $O(N K)$
arithmetic operations
to fill \texttt{C},
i.e., the matrix of the binomial coefficients;

\item $O(n L)$
arithmetic operations
to compute \texttt{B}
for all values of $q$;

\item $O(N^2 L)$
arithmetic operations
to compute all components of \texttt{G},
using the precomputed matrices
\texttt{C} and \texttt{B}.

\end{itemize}
So,
\[
A(\la, \ka)
= O(NK) + O(nL) + O(N^2 L).
\]
Since $n\le N$ and $K\le N$, we obtain the following upper estimate:
\begin{equation}
\label{eq:G_efficient_number_of_operations}
A(\la, \ka)
= O(N^2 L)
= O(N^2 \log_2(\la_0 + N)).
\end{equation}
\end{rem}

\begin{rem}[an upper bound for the number of arithmetic operations to compute
$\schur_\la(y^{[\ka]})$]
\label{rem:upper_bound_number_operations_for_schur}
Using the binary exponentiation,
we apply $O(n^2 \log(K))$
arithmetic operations in $T$
to compute
$G_{\emptyset}(y, \ka)$ via~\eqref{eq:determinant_vandermonde_confluent}.
Furthermore, we assume that the determinant of $G_\la(y,\ka)$ is computed by some of the classical algorithms
(a variant of PLU) involving $O(N^3)$ arithmetic operations in $T$.
Then, the total number of arithmetic operations is
\[
O(N^2 L) + O(N^3).
\]
Since $L\le \log_2(\la_0+1) + \log(N) + 1$,
we have
$O(N^2 L)\le O(N^2 \log_2(\la_0+1)) + O(N^2 \log_2(N))$,
and the last term is absorbed by $O(N^3)$.

We get the following final estimate for the number of operations in $T$ or $\bZ$ needed to compute
$\schur_\la(y^{[\ka]})$
via Algorithm~\ref{alg:G_efficient}:
\begin{equation}
\label{eq:bialternant_formula_number_of_operations}
O(N^3)+O(N^2 \log_2 (\la_0+1)).
\end{equation}
\end{rem}

\begin{rem}
The analysis in Remarks~\ref{rem:G_efficient_computational_complexity} and~\ref{rem:upper_bound_number_operations_for_schur} is naive in the following sense.
If, for example, $T=\bQ$ or $T=\bQ(y_0,\ldots,y_{n-1})$ and $\la_0+N$ is large,
then Algorithm~\ref{alg:G_efficient}
involves arithmetic operations
over large integers or polynomials with many terms.
In such cases, the arithmetic operations in $T$ do not have a constant complexity,
and counting the arithmetic operations in $T$ is not sufficient to estimate the time complexity of the algorithm.
For the last purpose,
a much deeper and complicated analysis is required.
\end{rem}

\begin{rem}[tests of the main formulas]
We have tested~\eqref{eq:bialternant_formula_Schur_rep}
in SageMath using symbolic computations with multivariate rational functions over the field $\bQ$ of rational numbers.
The left-hand side of~\eqref{eq:bialternant_formula_Schur_rep} has been computed with the Jacobi--Trudi formula.
In this manner, we have verified~\eqref{eq:bialternant_formula_Schur_rep}
for all $n$ in $\bN$ with $n\le 8$,
all $\ka$ in $\bN^n$ with $N\eqdef|\ka|\le 8$,
and all integer partitions $\la$ of length $\le N$
with $|\la|\le 8$.
For all these tests, Algorithms~\ref{alg:trivial_G} and \ref{alg:G_efficient} yield the same matrix.
We have also tested~\eqref{eq:determinant_vandermonde_confluent},
\eqref{eq:determinant_vandermonde_confluent_another_form},
\eqref{eq:alternating},
\eqref{eq:product_G_equal_H_M},
and~\eqref{eq:partial_reduction_JT}.
\end{rem}

\medskip

\section{Connection with plethysm}
\label{sec:connection_with_plethysm}

In this section,
we consider the case if
$\ka_1=\ldots=\ka_n=k$,
and show that
\[
\schur_\la\bigl(\,
\underbrace{y_1,\ldots,y_1}_{k\ \text{times}},\,\ldots,\,
\underbrace{y_n,\ldots,y_n}_{k\ \text{times}}\,
\bigr)
\]
can be written in terms of the plethysm of symmetric functions (see, e.g., \cite[Chapter~1]{Macdonald1995}).
Moreover, instead of $\schur_\la$,
we deal with an arbitrary symmetric function.

We denote by $\circ$ the plethysm operation on symmetric functions.
Given $s$ in $\bN$,
let $\powersum_s$ be the power sum:
\[
\powersum_s(x_1,x_2,x_3,\ldots)
\eqdef x_1^s + x_2^s + x_3^s + \ldots.
\]
If $\la=(\la_1,\ldots,\la_q)$ is a partition with $\la_1\ge\ldots\ge\la_q \ge 1$,
then, by definition,
\[
\powersum_\la
\eqdef
\powersum_{\la_1}\cdots\powersum_{\la_q}.
\]
Moreover, by definition,
$\powersum_\emptyset \eqdef 1$.

\begin{prop}
\label{prop:rep_vs_plethysm}
Let $f$ be a symmetric function,
$n,k\in\{1,2,\ldots\}$,
$y=(y_1,\ldots,y_n)$,
\[
\ka
=k^{[n]}
=\bigl(\,\underbrace{k,\ldots,k}_{n\ \text{times}}\,\bigr),
\qquad
y^{[\ka]}
= \bigl(\,
\underbrace{y_1,\ldots,y_1}_{k\ \text{times}},\ 
\ldots,\ 
\underbrace{y_n,\ldots,y_n}_{k\ \text{times}}\,
\bigr).
\]
Then
\begin{equation}
\label{eq:f_rep_eq_f_plethysm_kp1}
f(y^{[\ka]})
=(f \circ (k\powersum_1))(y).
\end{equation}
\end{prop}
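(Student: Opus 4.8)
The plan is to reduce everything to the power-sum generators of the ring of symmetric functions and to exploit that both sides of~\eqref{eq:f_rep_eq_f_plethysm_kp1} are ring homomorphisms in $f$. Work over $\bQ$ and let $\Lambda$ denote the $\bQ$-algebra of symmetric functions. Define two maps $\Phi,\Psi\colon\Lambda\to\bQ[y_1,\ldots,y_n]$ by
\[
\Phi(f)\eqdef f(y^{[\ka]}),
\qquad
\Psi(f)\eqdef \bigl(f\circ(k\powersum_1)\bigr)(y).
\]
First I would check that both are $\bQ$-algebra homomorphisms. For $\Phi$ this is immediate: evaluating a symmetric function at the fixed finite list $y^{[\ka]}$ (equivalently, specializing the first $N=nk$ variables and setting the rest to zero) is a ring homomorphism. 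For $\Psi$ I would invoke the standard fact (see Macdonald~\cite[Chapter~1]{Macdonald1995}) that, for a fixed second argument $g$, the plethysm $f\mapsto f\circ g$ is a $\bQ$-algebra homomorphism; composing it with evaluation at $y$, which is again a homomorphism, shows that $\Psi$ is one as well.

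Since the power sums $\powersum_1,\powersum_2,\ldots$ are algebraically independent generators of $\Lambda$ over $\bQ$, two $\bQ$-algebra homomorphisms out of $\Lambda$ coincide as soon as they agree on every $\powersum_s$. So the statement reduces to verifying $\Phi(\powersum_s)=\Psi(\powersum_s)$ for each $s\in\bN$. On the left, each $y_j$ occurs $k$ times in $y^{[\ka]}$, hence
\[
\Phi(\powersum_s)
=\powersum_s(y^{[\ka]})
=\sum_{j=1}^n k\,y_j^s
=k\,\powersum_s(y).
\]
On the right, I would first compute the plethysm $\powersum_s\circ(k\powersum_1)$ inside $\Lambda$. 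Writing $k\powersum_1$ as the sum of $k$ copies of $\powersum_1$ and using that the operator $\powersum_s\circ(\cdot)$ (the substitution $x_i\mapsto x_i^s$) is additive, together with $\powersum_s\circ\powersum_1=\powersum_s$, one obtains $\powersum_s\circ(k\powersum_1)=k\,\powersum_s$. Evaluating at $y$ gives $\Psi(\powersum_s)=k\,\powersum_s(y)$, matching $\Phi(\powersum_s)$, and the proof is complete.

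The only delicate point is the plethystic computation $\powersum_s\circ(k\powersum_1)=k\,\powersum_s$: one must treat $k\powersum_1$ as $k$ \emph{equal} summands, so that additivity of $\powersum_s\circ(\cdot)$ produces the factor $k$ and not $k^s$. Conceptually this is exactly the ``alphabet'' interpretation of plethysm: the monomials of $k\powersum_1(y)=k(y_1+\cdots+y_n)$ are $y_1,\ldots,y_n$, each repeated $k$ times, i.e., precisely the list $y^{[\ka]}$, whence $\bigl(f\circ(k\powersum_1)\bigr)(y)=f(y^{[\ka]})$. The power-sum argument above is simply a rigorous route to this identity, and I expect that verification of the two homomorphism properties and of this single plethystic value is the whole substance of the proof.
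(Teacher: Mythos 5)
Your proof is correct and follows essentially the same route as the paper's: both reduce the identity to power sums via the computation $\powersum_s(y^{[\ka]})=k\,\powersum_s(y)=\bigl(\powersum_s\circ(k\powersum_1)\bigr)(y)$ and then extend to all of $\Lambda$ using the structure of plethysm in its first argument. The only cosmetic difference is that you verify the identity on the algebra generators $\powersum_s$ and invoke that both sides are $\bQ$-algebra homomorphisms, whereas the paper checks it by hand on the linear basis elements $\powersum_\la$ (computing both sides as $k^q f(y)$) and extends by linearity of plethysm in the first argument --- the same structural facts from Macdonald, packaged slightly differently.
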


\begin{proof}
First, we notice that if $s\in\{0,1,2,\ldots\}$, then
\[
\powersum_s(y^{[\ka]})
=\sum_{j=1}^n k y_j^s
=k \powersum_s(y).
\]
Let $f=\powersum_{\la_1,\ldots,\la_q}$,
where $\la_1\ge\ldots\ge\la_q$.
Then
\begin{align*}
f(y^{[\ka]})
&=
\powersum_{
\la_1,\ldots,
\la_q}(y^{[\ka]})
=
\prod_{r=1}^q
\powersum_{\la_r}(y^{[\ka]})
=
\prod_{r=1}^q
\bigl(k \powersum_{\la_r}(y)\bigr)
=
k^q
\prod_{r=1}^q
\powersum_{\la_r}(y)
=k^q f(y).
\end{align*}
On the other hand,
\begin{align*}
(f\circ (k \powersum_1))(y)
&=
\bigl(\powersum_{\la_1,\ldots,\la_q}\circ(k\powersum_1)\bigr)(y)
=
\prod_{r=1}^q
\bigl(\powersum_{\la_r}\circ (k\powersum_1)\bigr)(y)
\\
&=
\prod_{r=1}^q
\bigl(k \powersum_{\la_r}(y)\bigr)
=k^q
\prod_{r=1}^q
\powersum_{\la_r}(y)
=k^q f(y).
\end{align*}
We have proven~\eqref{eq:f_rep_eq_f_plethysm_kp1} for $f=\powersum_{\la_1,\ldots,\la_q}$.
Obviously, \eqref{eq:f_rep_eq_f_plethysm_kp1} is also true if $f=\powersum_\emptyset$.
It is well known~\cite[(2.12)]{Macdonald1995}
that every symmetric function is a linear combination of power sums,
and the plethysm operation is linear with respect to the first argument~\cite[(8.3)]{Macdonald1995}.
Thereby, \eqref{eq:f_rep_eq_f_plethysm_kp1}
is extended to the general case.
\end{proof}

\begin{ex}
Let $f=\schur_{(1,1)}$
and $\ka=(2,2)$.
First, we consider a Schur polynomial in four variables:
\[
\schur_{(1,1)}(x_1,x_2,x_3,x_4) 
= x_1x_2 + x_1x_3 + x_1x_4 + x_2x_3 + x_2x_4 + x_3x_4.
\]
Then, we use only two variables, but we repeat each of them twice:
\begin{equation}
\label{eq:schur11_y1y1_y2y2}
\schur_{(1,1)}(y_1^{[2]}, y_2^{[2]}) 
= \schur_{(1,1)}(y_1,y_1,y_2,y_2) 
= y_1^2 + 4y_1y_2 + y_2^2.
\end{equation}
It is easy to see that $\schur_{(1,1)}$
is the following
linear combination of power sums:
\[
\schur_{(1,1)}
=\frac{1}{2}
\powersum_1
\powersum_1
-\frac{1}{2}
\powersum_2.
\]
By the definition of plethysm for power sums,
\[
\schur_{(1,1)}
\circ\bigl(2\powersum_1\bigr)
=
\left(
\frac{1}{2}
\powersum_1
\powersum_1
-\frac{1}{2}
\powersum_2
\right)
\circ(2\powersum_1)
=
\frac{1}{2}
\left(2\powersum_1\right)
\left(2\powersum_1\right)
-\frac{1}{2}\cdot 2\powersum_2
=
2\powersum_1\powersum_1
-\powersum_2.
\]
Restricting this symmetric function to two variables,
$y_1$ and $y_2$,
we obtain
\begin{equation}
\label{eq:s11_2p1_y1y2}
\left(\schur_{(1,1)}
\circ\bigl(2\powersum_1\bigr)\right)(y_1,y_2)
=
2\bigl(y_1+y_2\bigr)^2
-\left(y_1^2+y_2^2\right)
=
y_1^2 + 4y_1 y_2 + y_2^2.
\end{equation}
Notice that~\eqref{eq:schur11_y1y1_y2y2}
coincides with~\eqref{eq:s11_2p1_y1y2}.
\hfill$\qedsymbol$
\end{ex}

\section*{Acknowledgements}

We are grateful to Prof. Per Alexandersson
(\myurl{https://orcid.org/0000-0003-2176-0554})
for the idea of Section~\ref{sec:connection_with_plethysm}.

\section*{Declaration of competing interest}

There is no competing interest.

\bigskip

\label{endlabel}
\end{document}